\begin{document}

%**************************************************************V
%\layout
%**************************************************************

\newtheorem{theorem}{Theorem}%[section]
\newtheorem{proposition}{Proposition}
\newtheorem{lemma}{Lemma}[section]
\newtheorem{corollary}{Corollary}[section]
\newtheorem{conjecture}[theorem]{Conjecture}
\newtheorem{question}[theorem]{Question}
\newtheorem{problem}[theorem]{Problem}

\theoremstyle{definition}
\newtheorem{definition}[theorem]{Definition}
\newtheorem{example}[theorem]{Example}

\newtheorem{remark}{Remark}[section]

\def\theenumi{\roman{enumi}}

\numberwithin{equation}{section}

\renewcommand{\Re}{\operatorname{Re}}
\renewcommand{\Im}{\operatorname{Im}}

%*************************************************************V
\newcommand{\ind}{1\hspace{-.27em}\mbox{\rm l}}
\newcommand{\inde}{1\hspace{-.23em}\mathrm{l}}
\newcommand{\lqn}[1]{\noalign{\noindent $\displaystyle{#1}$}}
\newcommand{\bm}[1]{\mbox{\boldmath{$#1$}}}
\newcommand{\eps}{\varepsilon}
\newcommand{\lip}{{\rm Lip}}
\newcommand{\R}{\mathbb{R}}
\newcommand{\N}{\mathbb{N}}
\newcommand{\Z}{\mathbb{Z}}
\newcommand{\Sym}{\mathfrak{S}}
\newcommand{\Leb}{{\lambda}}
\newcommand{\md}{\mathbb{D}}
\newcommand{\me}{\mathbb{E}}
\newcommand{\Q}{\mathbb{Q}}
\newcommand{\1}{{\sf 1}}
\newcommand{\tmu}{\tilde{\mu}}
\newcommand{\tnu}{\tilde{\nu}}
\newcommand{\hmu}{\hat{\mu}}
\newcommand{\mb}{\bar{\mu}}
\newcommand{\bnu}{\overline{\nu}}
\newcommand{\tx}{\tilde{x}}
\newcommand{\tz}{\tilde{z}}
\newcommand{\A}{{\mathcal A}}
\newcommand{\skria}{{\mathcal A}}
\newcommand{\skrib}{{\mathcal B}}
\newcommand{\skric}{{\mathcal C}}
\newcommand{\skrid}{{\mathcal D}}
\newcommand{\skrie}{{\mathcal E}}
\newcommand{\skrif}{{\mathcal F}}
\newcommand{\skrig}{{\mathcal G}}
\newcommand{\skrih}{{\mathcal H}}
\newcommand{\skrii}{{\mathcal I}}
\newcommand{\skrik}{{\mathcal K}}
\newcommand{\skril}{{\mathcal L}}
\newcommand{\skrim}{{\mathcal M}}
\newcommand{\skrin}{{\mathcal N}}
\newcommand{\skrip}{{\mathcal P}}
\newcommand{\skriq}{{\mathcal Q}}
\newcommand{\skris}{{\mathcal S}}
\newcommand{\skrit}{{\mathcal T}}
\newcommand{\skriu}{{\mathcal U}}
\newcommand{\skriw}{{\mathcal W}}
\newcommand{\skrix}{{\mathcal X}}

\newcommand{\al}{{\alpha}}
\newcommand{\be}{{\beta}}
\newcommand{\ga}{{\gamma}}
\newcommand{\de}{{\delta}}

\newcommand{\heap}[2]{\genfrac{}{}{0pt}{}{#1}{#2}}
\newcommand{\sfrac}[2]{ \, \mbox{$\frac{#1}{#2}$}}
 \renewcommand{\labelenumi}{(\roman{enumi})}
\newcommand{\ssup}[1] {{\scriptscriptstyle{({#1}})}}

\newcommand{\cal}{\mathcal}
\allowdisplaybreaks

%*************************************************************

\def \R {{\mathbb R}}
\def \HH {{\mathbb H}}
\def \N {{\mathbb N}}
\def \C {{\mathbb C}}
\def \Z {{\mathbb Z}}
\def \Q {{\mathbb Q}}
\def \TT {{\mathbb T}}
\newcommand{\T}{\mathbb T}
\def \Dc {{\mathcal D}}

\newcommand{\tr}[1] {\hbox{tr}\left( #1\right)}

\newcommand{\area}{\operatorname{area}}

\newcommand{\Norm}{\mathcal N}
\newcommand{\simgeq}{\gtrsim}%
\newcommand{\simleq}{\lesssim}
%%these are with amssym, else use {\stackrel{>}{\sim}}

\newcommand{\length}{\operatorname{length}}

\newcommand{\curve}{\mathcal C} % curve
\newcommand{\vE}{\mathcal E} % lattice points on the sphere
\newcommand{\Ec}{\mathcal {E}} % lattice points on the sphere
\newcommand{\Sc}{\mathcal{S}} % lattice points on the sphere

\newcommand{\dist}{\operatorname{dist}}
\newcommand{\supp}{\operatorname{supp}}
\newcommand{\spec}{\operatorname{spec}}
\newcommand{\diam}{\operatorname{diam}}

\newcommand{\Ccap}{\operatorname{Cap}}
\newcommand{\E}{\mathbb E}

\newcommand{\sumstar}{\sideset{}{^\ast}\sum}

\newcommand {\Zc} {\mathcal{Z}} % igor's nodal intersections number
\newcommand{\ninumber}{\Zc}

\newcommand{\zeigen}{E} % eigenvalue
\newcommand{\eigen}{m}%igor's eigenvalue converntion

\newcommand{\ave}[1]{\left\langle#1\right\rangle} % average

\newcommand{\Var}{\operatorname{Var}}
\newcommand{\Prob}{\operatorname{Prob}}

\newcommand{\var}{\operatorname{Var}}
\newcommand{\Cov}{{\rm{Cov}}}
\newcommand{\meas}{\operatorname{meas}}

\newcommand{\leg}[2]{\left( \frac{#1}{#2} \right)} % Legendre symbol

\renewcommand{\^}{\widehat}

\newcommand {\Rc} {\mathcal{R}}

\title[Nodal area distribution for arithmetic random waves]
{Nodal area distribution\\for arithmetic random waves}
\author{Valentina Cammarota}

\address{Department of Mathematics, King's College London \& Dipartimento di Scienze Statistiche, Universit\`a degli Studi di Roma ``La Sapienza"  } 
\email{valentina.cammarota@uniroma1.it}

%\date{\today}

\begin{abstract}
We obtain the limiting distribution of the nodal area of random Gaussian Laplace eigenfunctions on 
$\mathbb{T}^3= \mathbb{R}^3/ \mathbb{Z}^3$ ($3$-dimensional `arithmetic random waves').
We prove that, as the multiplicity of the eigenspace goes to infinity, the nodal area converges to a universal, non-Gaussian, distribution. Universality follows from the equidistribution of lattice points on the sphere.
Our arguments rely on the Wiener chaos expansion of the nodal area: we show that, analogous to \cite{MPRW}, the fluctuations are dominated by the fourth-order chaotic component. The proof builds upon recent results in \cite{B&M} that establish an upper bound for the number of non-degenerate correlations of lattice points on the sphere. 
We finally discuss higher-dimensional extensions of our result. 
\end{abstract}

\maketitle

\section{Introduction and framework}

\subsection{Toral eigenfunctions and nodal volume}

Let $f: \mathbb{T}^d= \mathbb{R}^d/ \mathbb{Z}^d \to \mathbb{R}$, $d \ge 2$, be the real-valued functions satisfying the eigenvalue equation 
\begin{equation} \label{Schrodinger}
\Delta f + E f=0,
\end{equation}
where $ E>0$ and $\Delta$ is the Laplace-Beltrami operator on $\mathbb{T}^d$; the spectrum of $\Delta$ is totally discrete. 

The nodal set of a function is the zero set. 
Nodal sets for eigenfunctions of the Laplacian on smooth compact Riemannian manifolds have been studied intensively; it is known \cite{cheng} that except for a subset of lower dimension, the nodal sets of eigenfunctions are smooth manifolds of codimension one in the ambient manifold, and hence the {\it nodal volume} $${\rm Vol}(f^{-1}(0))$$ of $f$ is well defined. A fundamental conjecture of Yau \cite{cheng1, cheng2} asserts that for any smooth compact Riemannian manifold ${\cal M}$ there exist constants $0<c_1({\cal M}) \le c_2({\cal M})$ such that 
\begin{align} \label{yau}
c_1({\cal M})\; \sqrt{E} \le {\rm Vol}(f^{-1}(0)) \le c_2({\cal M}) \; \sqrt{E}.
\end{align}
Yau's conjecture was proven for real-analytic metrics by Donnelly and Fefferman \cite{D&F}, the lower bound in Yau's conjecture was recently established for general smooth manifolds by Logunov \cite{logunov}.   
 
For ${\cal M}={\mathbb T}^d$, the eigenspaces of the Laplacian are related to the theory of lattice points on $(d-1)$-dimensional spheres. Let 
$$S=\{n\in {\mathbb Z}: n= n_1^2+ \cdots+n_d^2, \;\; \text{for}\;\; n_1, \dots, n_d \in \mathbb{Z}\},$$ 
be the collection of all numbers expressible as a sum of $d$ squares. The sequence of eigenvalues, or {\it energy levels}, of \eqref{Schrodinger} are all numbers of the form 
$$E_n= 4 \pi^2 n, \hspace{1cm} n \in S.$$ In order to describe the Laplace eigenspace corresponding to $E_n$, we introduce the set of frequencies $\Lambda_n$; for $n \in S_n$ let
$$\Lambda_n= \{\lambda \in \mathbb{Z}^d:\; ||\lambda||^2=n\}.$$
 $\Lambda_n$ is the frequency set corresponding to $E_n$.  Using the notation 
$e(x)=\exp(2 \pi i x),$
the $\mathbb{C}$-eigenspace ${\cal E}_n$ corresponding to $E_n$ is spanned by the $L^2$-orthonormal set of functions $\{e(\langle \lambda, \cdot \rangle) \}_{\lambda \in \Lambda_n}$. We denote the dimension of ${\cal E}_n$ 
$$\cal{N}_n=\text{dim}\, {\cal E}_n=|\Lambda_n|,$$
that is equal to the number of different ways $n$ may be expressed as a sum
of $d$ squares.

\subsection{Arithmetic random waves}

The frequency set $\Lambda_n$ can be identified with the set of lattice points lying on a $(d-1)$-dimensional sphere with radius $\sqrt n$, 
the sequence of spectral multiplicities $\{{\cal N}_n\}_{n \ge 1}$ is unbounded. It is natural to consider properties of  {\it generic} or {\it random} eigenfunctions $f_n \in {\cal E}_n$,  is the high energy asymptotics.  
More precisely, let $f_n: \mathbb{T}^d \to \mathbb{R}$ be the Gaussian random field of (real valued)  ${\cal E}_n$-functions with eigenvalue $E_n$, i.e. the random linear combination  
\begin{align} \label{fn}
f_n(x)=\frac{1}{\sqrt{ \cal{N}_n}} \sum_{\lambda \in \Lambda_n} a_{\lambda} e(\langle \lambda, x \rangle),
\end{align}
where the coefficients $\{a_{\lambda}\}$ are complex-Gaussian random variables verifying the following properties: 
\begin{enumerate}[i)]
\item every $a_{\lambda}$ has the form  $a_{\lambda}={\rm Re}(a_{\lambda})+ i \, {\rm Im}(a_{\lambda})$ where ${\rm Re}(a_{\lambda})$ and ${\rm Im}(a_{\lambda})$  are two independent real-valued, centred, Gaussian random variables with variance $1/2$, \label{i}
\item the $a_{\lambda}$'s  are stochastically independent, save for the relations $a_{-\lambda} = \overline{a}_{\lambda}$ making $f_n$ real-valued. \label{iii}
\end{enumerate}
\vspace{0.2cm}
\noindent By definition, $f_n$ is stationary, i.e. the law of $f_n$ is invariant under all translations 
 $$f(\cdot) \to f(x+\cdot), \hspace{1cm} x \in {\mathbb T}^d;$$ in fact $f_n$ is a centred Gaussian random field with covariance function 
 \begin{align*}
\mathbb{E}[f_n(x) f_n(y)]
 &= \frac{1}{ \cal{N}_n} \sum_{\lambda  \in \Lambda_n}   \cos(2 \pi \langle \lambda, x-y \rangle).
 \end{align*}
Note that the normalising factor in \eqref{fn} is chosen so that $f_n$ has unit-variance.

 \subsection{Prior work on this model}
 
Our object of study is the {\it nodal volume}, i.e. the sequence $\{{\cal V}_n\}_{n \in S}$ of all random variables of the form 
\begin{align*}
{\cal V}_n= \text{Vol} (f^{-1}_n(0)).
\end{align*}
The expected value of ${\cal V}_n$ was computed in \cite{R&W} to be, for every $d \ge 2$,  
\begin{align*}
\mathbb{E}[ {\cal V}_n]= {\cal I}_d \sqrt{\frac{ E_n}{4 \pi^2}}, \hspace{0.5cm}  {\cal I}_d=\sqrt{\frac{4 \pi}{d}} \frac{\Gamma(\frac{d+1}{2})}{\Gamma(\frac d 2)},
%= \sqrt{\frac{ 1}{ d \pi}} \frac{\Gamma(\frac{d+1}{2})}{\Gamma(\frac d 2)} \sqrt{ E_n},
\end{align*}
 in agreement with Yau's conjecture \eqref{yau}. The more challenging question of the asymptotic behavior of the variance was also addressed in \cite{R&W} where the following bound for the variance was computed for every dimension $d \ge 2$  
\begin{align*}
\text{Var} ({\cal V}_n)=O\left( \frac{E_n}{\sqrt{ {\cal N}_n}} \right),
\end{align*}
and it was conjectured that the stronger bound 
\begin{align} \label{10luglio}
\text{Var} ({\cal V}_n)=O\left( \frac{E_n}{{ {\cal N}_n}} \right)
\end{align}
should hold. 

\subsubsection{Nodal length}
In \cite{KKW} it was derives the precise asymptotic behavior of the variance of the nodal length  ${\cal L}_n$ of the random eigenfunctions on $\mathbb{T}^2$. 
For $d=2$ the set $\Lambda_n$ induces a discrete probability measure $\mu_n$ on the unit circle ${\cal S}^1=\{z \in \mathbb{C}: |z|=1\}$ by defining 
\begin{align*}
\mu_n=\frac{1}{{\cal N}_n} \sum_{\lambda \in {\cal N}_n} \delta_{\frac{\lambda}{\sqrt n}},
\end{align*} 
where $\delta_x$ is the Dirac delta centred at $x \in {\cal S}^1$. %For $n \in S$, we define 
It was shown \cite[Theorem 1.1]{KKW} that, if  $\{n_i\}_{i \ge 1}$ is any sequence of elements in $S$ such that ${\cal N}_{n_i} \to \infty$, then 
\begin{align} \label{10luglio2017}
\text{Var} ({\cal L}_n)=c_{n_i} \frac{E_{n_i}}{{\cal N}^2_{n_i}} (1+o(1)), \hspace{1cm} c_{n_i}=\frac{1+ \hat{\mu}^2_{n_i}(4)}{512}, 
\end{align}
where $\hat{\mu}_n(k)\in[-1,1]$ is the Fourier transform of $\mu_n$. The positive real numbers $c_{n_i}$ in the leading constant depend on the angular distribution of $\Lambda_{n_i}$, i.e., in dimension $d=2$, the asymptotic behavior of the variance is {\it non-universal}. 

Also remarkably, the order of magnitude of \eqref{10luglio2017} is much smaller then expected \eqref{10luglio} since the terms of order $E_n/{{\cal N}_n}$ in the asymptotic expression for the nodal length variance cancel perfectly. This effect was called {\it arithmetic Berry cancellation} after the cancellation phenomenon observed by Berry in \cite{MB}.

The limiting distribution of the nodal length was derived in \cite[Theorem 1.1]{MPRW} where it is proved that the normalized nodal length converges to a non-universal, non-Gaussian, limiting distribution, depending on the angular distribution of lattice points. For $\{n_i\}_{i \ge 1} \in S$, such that  ${\cal N}_{n_i} \to \infty$ and $|\hat{\mu}_{n_i}(4)| \to \eta \in  [0,1]$, one has
  \begin{align} \label{10luglio16:55}
 \frac{{\cal L}_n - \mathbb{E}[{\cal L}_n]}{\sqrt{   \text{Var}({\cal L}_n)  }} \stackrel{law}{\to}
 \frac{1}{2 \sqrt{1+ \eta^2}} [2-(1+ \eta) X^2_1 -(1- \eta) X_2^2],
 \end{align}
where $X_1, X_2$ are i.i.d. standard Gaussian. A quantitative version of \eqref{10luglio16:55} was derived in \cite{P&R}. 

As observed in \cite{MPRW, DNPR} the non-central and non-universal behavior of second order fluctuations originates from the {\it chaotic cancellation phenomenon} exploited in \cite{MPRW}: in the Wiener chaos expansion of ${\cal L}_n$ the projection on the second chaos vanishes, and the limiting fluctuations of ${\cal L}_n$ are completely determined by its projection on the fourth Wiener chaos. Should the second projection of ${\cal L}_n$ not disappear in the limit, then the order of the variance would be $E_n/{{\cal N}_n}$.   

The dominance of the fourth-order chaos, and the consequent lower order of the variance, was observed also for the Euler Characteristic of excursion sets of random Laplace eigenfunctions on the $2$-dimensional sphere \cite{CM}. The investigation of the general validity of such asymptotic behavior for other geometric functionals of arithmetic random waves is left for future research.

\subsubsection{Nodal area} The asymptotic behavior of the nodal area variance on $\mathbb{T}^3$ has been recently analysed in \cite{B&M}: the variance of the nodal area has the following precise asymptotic behavior, as $n \to \infty$, $n \not\equiv 0, 4, 7\, (\text{mod 8})$,  
\begin{align} \label{varjr}
\text{Var} ({\cal A}_n)=\frac{n}{{\cal N}^2_n}  \left[ \frac{32}{375}+ O \left( \frac{1}{n^{1/28 -o(1)}} \right) \right].
\end{align}
The condition $n \not\equiv 0, 4, 7\, (\text{mod 8})$ implies ${\cal N}_n \to \infty$ (see Section \ref{10.7.2017}). 

In particular the $3$-dimensional torus exhibits arithmetic Berry cancellation like the $2$-dimensional torus. However, unlike the $2$-dimensional case, the leading order term does not fluctuate, this is due to the {\it equidistribution of lattice points on the sphere} (see Section \ref{10.7}).

\section{Main results and outline of the proof}
Our principal result is the asymptotic distribution, as ${\cal N}_n \to \infty$, of the sequence of normalized nodal areas:  
\begin{theorem} \label{th}
 Let $\textrm{{\Large$\chi$}}$ be a chi-square with $5$ degrees of freedom. As $n \to \infty$, $n \not\equiv 0, 4, 7\, (\rm{mod} \;  8)$,
\begin{align*}
\frac{{\cal A}_n-\mathbb{E}[{\cal A}_n]}{\sqrt{ {\rm Var}({\cal A}_n) }} \stackrel{law}{\to} \frac{1}{\sqrt{5 \cdot 2}} \left(5- \textrm{{\Large$\chi$}} \right).
\end{align*}
\end{theorem}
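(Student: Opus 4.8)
The plan is to follow the Wiener chaos strategy of \cite{MPRW}, adapted to the three-dimensional setting, and to exploit the equidistribution of lattice points on $\mathcal S^2$ that already underlies \eqref{varjr}. First I would use the Kac--Rice formula to represent $\mathcal A_n$ as an integral over $\mathbb T^3$ of $\delta_0(f_n(x))\|\nabla f_n(x)\|\,dx$, and then expand this functional in Wiener chaoses $\mathcal A_n = \sum_{q\ge 0}\mathcal A_n[q]$. By stationarity and the parity of the integrand, the odd chaoses vanish and the projection onto the zeroth chaos is $\mathbb E[\mathcal A_n]$. The crucial algebraic step, exactly parallel to the $d=2$ case, is to show that the second-order chaotic projection $\mathcal A_n[2]$ has variance of strictly smaller order than $n/\mathcal N_n^2$ — this is the arithmetic Berry cancellation in dimension three — so that the fluctuations are carried entirely by $\mathcal A_n[4]$. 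This requires computing the Hermite-type coefficients in the expansion of $\delta_0(y)\|z\|$ around the Gaussian reference measure and checking that the relevant contraction against the covariance kernel and its derivatives vanishes to leading order after integrating against the equidistributing measure $\mu_n$; the spherical harmonic content of the $d=3$ kernel makes the surviving term isotropic, which is why no $\hat\mu_n(\cdot)$-dependence remains.

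Next I would identify the limiting law of the dominant term $\mathcal A_n[4]/\sqrt{\operatorname{Var}(\mathcal A_n)}$. Writing out $\mathcal A_n[4]$ explicitly, it is a fourth-order Wiener--It\^o integral built from the field $f_n$, its three partial derivatives $\partial_1 f_n,\partial_2 f_n,\partial_3 f_n$, and quadratic Hermite polynomials in these five jointly Gaussian fields (which at a fixed point are, after normalization, i.i.d. standard Gaussians, the derivatives having variance $\tfrac{4\pi^2 n}{3}$ and being uncorrelated with $f_n$ and with each other by stationarity and isotropy of the leading covariance). The natural guess, matching the target $\tfrac{1}{\sqrt{10}}(5-\chi_5^2)$, is that $\mathcal A_n[4]$ is asymptotically a constant multiple of $5 - (\xi_1^2+\cdots+\xi_5^2)$ where the $\xi_i$ are the five normalized Gaussian components; the $-\chi_5^2$ shape (as opposed to the $d=2$ expression with two differently-weighted squares) is precisely the signature of the full rotational symmetry restored by equidistribution. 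To make this rigorous I would compute the covariance structure of the five ``coordinate'' fourth-chaos pieces and show, via the kernel estimates, that the off-diagonal long-range interactions vanish in the limit, so that $\mathcal A_n[4]$ concentrates on the diagonal and converges to the claimed finite-dimensional-Gaussian functional; the variance normalization then forces the constant $\tfrac{1}{\sqrt{10}}$ in agreement with the $\tfrac{32}{375}$ of \eqref{varjr}.

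Finally I would upgrade convergence of $\mathcal A_n[4]$ to convergence of the full normalized nodal area: since $\operatorname{Var}(\mathcal A_n[4])/\operatorname{Var}(\mathcal A_n)\to 1$, the tail chaoses $\sum_{q\ge 6}\mathcal A_n[q]$ are negligible in $L^2$, and $\mathcal A_n[2]$ is negligible by the cancellation above, so $\mathcal A_n - \mathbb E[\mathcal A_n] = \mathcal A_n[4] + o_{L^2}(\sqrt{\operatorname{Var}(\mathcal A_n)})$, whence the limiting law is that of $\mathcal A_n[4]$. To close the argument one needs a genuine distributional limit for $\mathcal A_n[4]$, and here I would invoke the fourth-moment / contraction criterion for vectors of multiple Wiener--It\^o integrals together with an explicit continuous mapping onto the quadratic functional $\tfrac{1}{\sqrt{10}}(5-\chi_5^2)$.

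The main obstacle, I expect, is controlling the \emph{second} chaos and the precise leading constant of $\mathcal A_n[4]$: one must show that the $\mathcal A_n[2]$ contribution, which naively would give variance of order $n/\mathcal N_n$, cancels, and simultaneously track the arithmetic sums $\sum_{\lambda,\lambda'}$ over correlations of lattice points on $\mathcal S^2$ — this is exactly where the input from \cite{B&M}, namely the upper bound on the number of non-degenerate correlations (tuples $\lambda_1+\lambda_2+\lambda_3+\lambda_4 = 0$ with the $\lambda_i\in\Lambda_n$ not pairing off), is indispensable, since it is what guarantees that the ``degenerate'' diagonal terms dominate and that equidistribution of $\mu_n$ dictates the isotropic limit. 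The chaos bookkeeping in five correlated fields (one function plus three derivatives, with a nontrivial $5\times5$ covariance at coinciding points only up to the scaling of the derivative block) is heavier than in \cite{MPRW}, but conceptually routine once the arithmetic estimates are in place.
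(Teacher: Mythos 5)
Your overall strategy coincides with the paper's: Kac--Rice/$\delta_0$ representation, Wiener chaos expansion, exact cancellation of the second chaos, dominance of the fourth chaos via ${\rm Var}({\cal A}_n)\sim{\rm Var}({\cal A}_n[4])$ from \eqref{varjr}, the Benatar--Maffiucci bound $|{\cal X}_n(4)|\ll{\cal N}_n^{7/4+o(1)}$ to kill the non-degenerate correlations, and equidistribution to make the limit isotropic. Two small corrections: in the paper ${\cal V}_n[2]$ vanishes \emph{identically} (the two contributions in Lemma \ref{14:20} are both proportional to $W(n)$ and the coefficients cancel exactly), not merely to leading order; and you never account for the deterministic constant in the fourth chaos, which arises from a Law of Large Numbers applied to the quartic statistics $R(n)=\frac1{{\cal N}_n}\sum_\lambda|a_\lambda|^4$ and $R_{k,j}(n)$ and is what centres the limit.

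The genuine gap is in your identification of the limit law. You attribute the five degrees of freedom of $\textrm{{\Large$\chi$}}$ to ``five jointly Gaussian fields'' consisting of $f_n$ and its partial derivatives, i.i.d.\ after normalization at a point. That is four fields, not five, and in any case the limit theorem is not about pointwise values of the field at all: after integrating over $\mathbb T^3$ the fourth chaos collapses (modulo the $R$- and $X$-terms) onto the quadratic form $W^2(n)-3\sum_{j,k}W^2_{j,k}(n)$ in the \emph{arithmetic} statistics $W_{j,k}(n)=\frac1{n\sqrt{{\cal N}_n}}\sum_{\lambda\in\Lambda_n}\lambda_{(j)}\lambda_{(k)}(|a_\lambda|^2-1)$, and the relevant CLT is in the number of lattice points ${\cal N}_n\to\infty$ for this six-dimensional vector, proved componentwise (Lyapunov) and upgraded to joint convergence by the second-chaos criterion of Nourdin--Peccati. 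The ``5'' then has a different origin: the constraint $\sum_k\lambda_{(k)}^2=n$ forces $W(n)=\sum_kW_{k,k}(n)$, so the quadratic form equals $-\sum_{i<j}(W_{ii}-W_{jj})^2-6\sum_{i<j}W^2_{ij}$, a negative semidefinite form on the five-dimensional space of traceless symmetric $3\times3$ matrices; the limiting covariance matrix computed from Duke's equidistribution (diagonal fourth moments $\tfrac25$, mixed $\tfrac2{15}$) is singular in exactly the trace direction, and diagonalizing the remaining rank-$5$ part yields $-\tfrac45\textrm{{\Large$\chi$}}$ with $\textrm{{\Large$\chi$}}\sim\chi^2_5$. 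Without this explicit covariance computation and diagonalization your argument cannot produce the constant $\tfrac1{\sqrt{10}}$ or indeed confirm that the limit is a chi-square at all, since the heuristic you give points at the wrong random objects.
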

\vspace{0.3cm}

The first step in the proof of Theorem \ref{th} is the derivation of the Wiener chaos expansion of the nodal area ${\cal A}_n$. \\

In particular in Lemma \ref{caotic} we derive the Wiener chaos expansion of the nodal volume for every dimension $d \ge 2$:  
$${\cal V}_n=\mathbb{E}[{\cal V}_n]+ \sum_{q=1}^{\infty} {\cal V}_n[q],$$
here ${\cal V}_n[q]$, $q=1, 2, \dots$ denotes the orthogonal projection of ${\cal V}_n$ onto the so-called  Wiener chaos of order $q$ (see Section \ref{wiener} below). The proof of Lemma \ref{caotic} (see Appendix \ref{caotic_A}) is a straightforward generalisation of the Wiener chaos expansion of the nodal length performed in \cite[Proposition 3.2]{MPRW} (see also \cite{K&L} for analogous computations involving the length of level curves for Gaussian fields on the Euclidean plane). This $d$-dimensional analogue of  \cite[Proposition 3.2]{MPRW} is shown by performing the $L^2$-expansion of the norm of the $d$-dimensional gradient vector of $f_n$.

\vspace{0.3cm}

As for the nodal length \cite[Section 1.4]{MPRW}, we obtain that both the second-order projection and all odd-order projections vanish. A precise analysis of the fourth-order chaos ${\cal V}_n[4]$ allows as to show the following: 
%\begin{align} \label{mar18lug}
%{\cal V}_n[4]%&=\sqrt{ \frac{n\pi }{d} }  \frac{\Gamma\left(\frac{d+1} 2\right)   }{4 {\cal N}_n \Gamma\left(\frac{d} 2\right)} \Big[  \frac{2}{d+2}W^2(n) - \frac{2 d}{d+2} \sum_{j, k} W^2_{j,k}(n)  -R(n)+ \frac{3 d}{d+2} \sum_{j, k} R_{k,j}(n) \\
%%& \hspace{3.2cm}+ X(n) + 2 \sum_k X_{k,k}(n) - \frac{d}{d+2} \sum_{ j,k} X_{k,k,j,j}(n)\Big]\\
%%&=\sqrt{ \frac{n\pi }{d} }  \frac{\Gamma\left(\frac{d+1} 2\right)   }{4 {\cal N}_n \Gamma\left(\frac{d} 2\right)} \Big[  \frac{2}{d+2}W^2(n) - \frac{2 d}{d+2} \sum_{j, k} W^2_{j,k}(n) + 4 \frac{d-1}{d+2}+o_{\mathbb{P}}(1) \\
%%& \hspace{3.2cm}+ X(n) + 2 \sum_k X_{k,k}(n) - \frac{d}{d+2} \sum_{ j,k} X_{k,k,j,j}(n)\Big]\\
%%&=\sqrt{ \frac{n\pi }{d} }  \frac{\Gamma\left(\frac{d+1} 2\right)   }{4 {\cal N}_n \Gamma\left(\frac{d} 2\right)} \Big[ 4 \frac{d-1}{d+2}+  \frac{2}{d+2}W^2(n) - \frac{2 d}{d+2} \sum_{j, k} W^2_{j,k}(n)  \\
%%& \hspace{3.2cm}+ X(n) + 2 \sum_k X_{k,k}(n) - \frac{d}{d+2} \sum_{ j,k} X_{k,k,j,j}(n) +o_{\mathbb{P}}(1)\Big]\\
%&=\sqrt{ \frac{n\pi }{d} }  \frac{\Gamma\left(\frac{d+1} 2\right)   }{4 {\cal N}_n \Gamma\left(\frac{d} 2\right)} \Big[ 4 \frac{d-1}{d+2}+  \frac{2}{d+2}\sum_{j, k=1}^d   \Big(  W_{k,k}(n)  W_{j,j}(n) - d \, W^2_{k,j}(n) \Big)  \\
%& \hspace{3.2cm}+ X(n) + 2 \sum_{k=1}^d X_{k,k}(n) - \frac{d}{d+2} \sum_{ j,k=1}^d  X_{k,k,j,j}(n) +o_{\mathbb{P}}(1)\Big], \nonumber
%\end{align}
\begin{align} \label{mar18lug}
{\cal V}_n[4]&=\sqrt{ \frac{n\pi }{d} }  \frac{\Gamma\left(\frac{d+1} 2\right)   }{4 {\cal N}_n \Gamma\left(\frac{d} 2\right)} \Big[ 4 \frac{d-1}{d+2}+ \frac{2}{d+2}W^2(n) - \frac{2 d}{d+2} \sum_{j, k} W^2_{j,k}(n) \\
& \hspace{3.2cm}+ X(n) + 2 \sum_k X_{k,k}(n) - \frac{d}{d+2} \sum_{ j,k} X_{k,k,j,j}(n) +o_{\mathbb{P}}(1)\Big]. \nonumber
\end{align}
where 
\begin{align*}
&\hspace{-3cm}W(n)= \frac{1}{\sqrt{{\cal N}_n }} \sum_{\lambda \in  \Lambda_n} (|a_{\lambda}|^2-1),\\
&\hspace{-3cm}W_{j,k}(n)=\frac{1}{n \sqrt{{\cal N}_n }} \sum_{\lambda \in  \Lambda_n} \lambda_{(k)} \lambda_{(j)} (|a_{\lambda}|^2-1),\\
&\hspace{-3cm}X(n)=\frac{1}{{ \cal N}_n} \sum_{ \lambda_1,\dots , \lambda_4  \in \cal{X}_n(4)} a_{\lambda_1}   a_{\lambda_2}  a_{\lambda_3}  a_{\lambda_4}, \\
& \hspace{-3cm}X_{k,k}(n)=\frac{1}{n { \cal N}_n} \sum_{\lambda_1,\dots , \lambda_4  \in \cal{X}_n(4)} \lambda_{1,(k)}  \lambda_{2,(k)}  a_{\lambda_1}   a_{\lambda_2}  a_{\lambda_3}  a_{\lambda_4}, \\
&\hspace{-3cm}X_{k,k,j,j}(n)= \frac{1}{n^2{ \cal N}_n} \sum_{\lambda_1,\dots , \lambda_4  \in \cal{X}_n(4)} \lambda_{1,(k)}  \lambda_{2,(k)} \lambda_{3,(j)}  \lambda_{4,(j)}  a_{\lambda_1}   a_{\lambda_2}  a_{\lambda_3}  a_{\lambda_4},  
\end{align*}
$\lambda_{(k)}$ denotes the $k$-th component of $\lambda$, and $\cal{X}_n(4)$ is the set of $d$-dimensional lattice point, non-degenerate, $4$-correlations defined in Section \ref{spectral} below (see also \cite[Section 1.4]{B&M}). \\

We remark that in dimension $d=2$ 
$$| \cal{X}_n(4)|=0,$$
for all $n \in S$, which may be seen by noting that two circles intersect in at most two points (Zygmund's trick), so the asymptotic behaviour of the nodal length studied in \cite{MPRW} comes form a precise analysis of the asymptotic behavior of the first three terms in \eqref{mar18lug}:
\begin{align*}
{\cal L}_n[4]&=\pi \sqrt{ \frac{n }{512} }  \frac{1 }{ {\cal N}_n } \Big[2+  W^2(n) - 2  \sum_{j, k} W^2_{j,k}(n) +o_{\mathbb{P}}(1)\Big]. \nonumber
\end{align*}

Our proof of Theorem \ref{th} relies on recent results by Benatar and Maffiucci \cite[Section 1.4]{B&M} (see Lemma \ref{JB} below) showing that, in dimension $d=3$, the tuples that cancel pairwise dominate the non-degenerate tuples in the high frequency limit. In particular in Lemma \ref{X&Xjk} we prove that as  $n \to \infty$, $n \not\equiv 0, 4, 7 \, (\rm{mod} \, 8)$
\begin{align*}
X(n), X_{k,k}(n), X_{k,k,j,j} (n) \stackrel{L^2}{\to} 0.
\end{align*}
This implies that 
\begin{align*}
{\cal A}_n[4]
&=\frac{\sqrt n}{5  \sqrt 3 \, {\cal N}_n} \Big[ 4+ W^2(n) - 3 \sum_{j, k} W^2_{j,k}(n)  + o_{\mathbb{P}}(1)\Big].
\end{align*}

To prove Theorem \ref{th} we first show that the normalized fourth-order projection of the nodal area converges to a universal, non-Gaussian, distribution: as  ${\cal N}_n \to \infty$ 
 \begin{align*}
\frac{{\cal A}_n[4]}{\sqrt{ {\rm Var}({\cal A}_n[4]) }} \stackrel{law}{\to} \frac{1}{\sqrt{5 \cdot 2}} \left(5-   \textrm{{\Large$\chi$}} \right),
\end{align*}
where $\textrm{{\Large$\chi$}}$ is a chi-square with $5$ degrees of freedom. The derivation of such limiting distribution requires a precise analysis of the asymptotic behavior of the covariance matrix of the $W_{j,k}(n)$, i.e. the asymptotic behaviour of the following quantities  
\begin{align*}
\frac{1}{ n^2 {\cal N}_n}{\sum_{\lambda \in \Lambda_n}} \lambda^4_{(k)}=\frac{1}{ 5}+O\left( \frac{1}{n^{1/28 -o(1)}}\right), \hspace{0.9cm}
\frac{1}{ n^2 {\cal N}_n}{\sum_{\lambda \in \Lambda_n}} \lambda^2_{(k)} \lambda^2_{(j)}=\frac{1}{3 \cdot 5}+O\left( \frac{1}{n^{1/28 -o(1)}}\right).
\end{align*} 
This is obtained in Lemma \ref{lambda_jk} as an application of the equidistribution of lattice points on spheres proved by Duke \cite{D, D3} and Golubeva and Fomenko \cite{G&F} (see \cite[Lemma 8]{P&S} or Lemma \ref{equid} below). 

The last step in the proof of Theorem \ref{th} is the observation that, as in \cite{MPRW}, the nodal area is dominated by its fourth-order chaos component: using the asymptotic behaviour of the variance in \eqref{varjr} it is easy to check the equivalence
\begin{align*}
{\rm Var}({\cal A}_n) \sim {\rm Var}({\cal A}_n[4]),
\end{align*}
as ${\cal N}_n \to \infty$. Theorem \ref{th} follows since different chaotic projections are orthogonal in $L^2$.\\

In dimension $d\ge 5$ the set of non-degenerate tuples ${\cal X}_n(4)$ is much larger than $\cal{D}_n(4)$ \cite{B&M}, as opposed to what happens in dimensions $2$ and $3$. This implies that the derivation of the asymptotic behavior of variance and limiting distribution of the nodal volume requires a precise analysis of the structure of the non-degenerate tuples ${\cal X}_n(4)$ which seems to be very technically demanding.

\subsection{Notation} For functions $f$ and $g$ we will use Landau's asymptotic notation $$f=O(g)$$ to denote that $f \le C g$ for some constant $C$. With $\stackrel{law}{\to}$ we denote weak convergence
of probability measures and we use the symbol $o_{\mathbb{P}}(1)$ to denote a sequence of random variables converging to zero in probability.  

We will use $\lambda, \lambda_1, \lambda_2 \dots$ and in general $\lambda_i$, $i=1,2,\dots$ to denote elements of $\Lambda_n$, while  $\lambda_{(k)}$ and $\lambda_{i,(k)}$ with $k=1, \dots, d$, will denote the $k$th component of the vectors $\lambda$ and $\lambda_i \in \Lambda_n$ respectively. The indices $j,k$ always run from $1$ to $d$.

For $k=1,\dots d$, we denote with $\partial_k f_n(x)$ the derivative of $ f_n(x)$ with respect to $x_k$
\begin{align*}
\partial_k f_n(x)%&= \frac{\partial}{\partial x_k} f_n(x)%= \frac{1}{\sqrt{{\cal N}_n} } \sum_{\lambda \in \Lambda_n} a_{\lambda} \partial_k e(\langle \lambda,x\rangle)
= \frac{2 \pi i}{\sqrt{{\cal N}_n}}  \sum_{\lambda \in \Lambda_n} a_{\lambda} \lambda_{(k)} e(\langle \lambda,x\rangle),
\end{align*}
in view of  \cite[Lemma 2.3]{R&W}, see formula \eqref{RW} below, the random field $\partial_k f_n$ has variance 
\begin{align*}
\text{Var}( \partial_k f_n(x) )&= \frac{2^2 \pi^2 (-1)}{{\cal N}_n}  \sum_{\lambda_1, \lambda_2  \in \Lambda_n} \mathbb{E}[a_{\lambda_1} a_{\lambda_2} ] \, \lambda_{1,(k)} \, \lambda_{2,(k)} \, e(\langle \lambda_1 ,x\rangle) \, e(\langle \lambda_2 ,x\rangle)\\
&= \frac{2^2 \pi^2 }{{\cal N}_n}  \sum_{\lambda \in \Lambda_n}  \lambda^2_{(k)}=\frac{2^2 \pi^2 n }{d} ,
\end{align*}
% we have that  
%\begin{align*}
%\text{Var}( \partial_k f_n(x) )%=\frac{2^2 \pi^2 }{{\cal N}_n}  \sum_{\lambda \in \Lambda_n}  \lambda^2_{(k)}= \frac{2^2 \pi^2 }{{\cal N}_n}  \frac{n}{d}{\cal N}_n
%=  \frac{2^2 \pi^2 n }{d};
%\end{align*}
we introduce then the normalized derivative $f_{n,k}(x)$ defined by 
\begin{align} \label{norm_d}
f_{n,k}(x)= \frac{\partial_k f_n(x)}{2 \pi   \sqrt{ \frac{n }{d} }  }= i \sqrt{\frac{d}{n {\cal N}_n}}  \sum_{\lambda \in  \Lambda_n} \lambda_k a_{\lambda} e(\langle \lambda, x \rangle).
\end{align}
Note that $f_{n,k}(x)$ is real-valued since $f^2_{n,k}(x)=f_{n,k}(x) \overline{f_{n,k}}(x)$. We note that conditions \ref{i}) and \ref{iii}) in \eqref{fn} immediately imply that  
$${\rm Var } (a^2_{\lambda})=\mathbb{E}[a^2_{\lambda}]=\mathbb{E}[{\rm Re}(a_{\lambda})] - \mathbb{E}[{\rm Im}(a_{\lambda})]=0,$$
and that $2 |a_{\lambda}|^2$ has a chi-squared distribution with $2$ degrees of freedom:   
\begin{align*}
\mathbb{E}[ |a_{\lambda}|^2]=% \frac 1 2 \cdot 2 =
1, \hspace{1cm}  \mathbb{E}[ (|a_{\lambda}|^2-1)^2]={\rm Var } (|a_{\lambda}|^2) =% \frac 1 4 \cdot 2 \cdot 2 =
1, \hspace{1cm} \mathbb{E}[|a_{\lambda}|^4]=2.
\end{align*} 
We also define
\begin{align*}
%&W(n)= \frac{1}{\sqrt{{\cal N}_n }} \sum_{\lambda \in  \Lambda_n} (|a_{\lambda}|^2-1), \hspace{1cm} W_{k,j}(n)=\frac{1}{n \sqrt{{\cal N}_n }} \sum_{\lambda \in  \Lambda_n} \lambda_{(k)} \lambda_{(j)} (|a_{\lambda}|^2-1),\\
&R(n)=\frac{1}{{\cal N}_n} \sum_{\lambda \in  \Lambda_n} |a_{\lambda}|^4, \hspace{2.3cm} R_{k,j}(n)=\frac{1}{n^2 {\cal N}_n} \sum_{\lambda \in  \Lambda_n} \lambda_{(k)}^2  \lambda_{(j)}^2  |a_{\lambda}|^4. 
%&X(n)=\frac{1}{{ \cal N}_n} \sum_{ \lambda_1,\dots , \lambda_4 \in \cal{X}_n(4)} a_{\lambda_1}   a_{\lambda_2}  a_{\lambda_3}  a_{\lambda_4},   \hspace{0.7cm}  X_{k,k}(n)=\frac{1}{n { \cal N}_n} \sum_{\lambda_1,\dots , \lambda_4 \in \cal{X}_n(4)} \lambda_{1,(k)}  \lambda_{2,(k)}  a_{\lambda_1}   a_{\lambda_2}  a_{\lambda_3}  a_{\lambda_4}, \\
%&X_{k,k,j,j}(n)= \frac{1}{n^2{ \cal N}_n} \sum_{\lambda_1,\dots , \lambda_4 \in \cal{X}_n(4)} \lambda_{1,(k)}  \lambda_{2,(k)} \lambda_{3,(j)}  \lambda_{4,(j)}  a_{\lambda_1}   a_{\lambda_2}  a_{\lambda_3}  a_{\lambda_4},  
\end{align*}

\subsection{Acknowledgements} The research leading to these results has received funding from the European Research Council under the European Union's Seventh Framework Programme (FP7/2007-
2013)/ERC grant agreements no. 335141. The author wish to thank Igor Wigman for drawing her attention to this problem and for stimulating discussions. The author is grateful to Jacques Benatar, Domenico Marinucci and Ze\'ev Rudnick for insightful remarks.

\section{Lattice points on spheres: spectral correlations and equidistribution}

\subsection{Properties of the frequency set} \label{10.7.2017}

The dimension ${\cal N}_n= \text{dim} \, {\cal E}_n$ is the number of ways of expressing $n$ as a sum of $d$ integer squares. In dimension $d \ge 5$, $\cal{N}_n$ grows roughly as $n^{d / 2 -1}$ as $n \to \infty$ \cite[Theorem 20.9]{I&K}. For $d \le 4$ the dimension of the eigenspace need not grow with $n$ and the behavior of $\cal{N}_n$ is more erratic. 

If $d=2$, ${\cal N}_n$ is given in terms of the prime decomposition of $n$ as follows \cite[Section 16.9]{H&W}: for 
$$n=2^{\alpha} \prod_j p_j^{\beta_j} \prod_k q_k^{2 \gamma_k},$$ where $p_j, q_k$ are odd primes $p_j \equiv 1\; \text{mod}\, 4$ and $q_k \equiv 3\; \text{mod}\, 4$ and $\alpha, \beta_j, \gamma_k$ are positive integers, then $\cal{N}_n=4 \prod_j(\beta_j+1)$ or otherwise $n$ is not a sum of two squares and ${\cal N}_n=0$. ${\cal N}_n$ is subject to large and erratic fluctuations; it grows {\it on average} \cite{La}, over integers which are sums of two squares, as  $\text{const} \cdot \sqrt{\log n}$, but could be as small as $8$ for an infinite sequence of prime numbers $p \equiv 1\; \text{mod}\, 4$, or as large as a power of $\log n$.   

In dimension $d=3$, by a classical result of Legendre and Gauss, $n$ is a sum of three squares if and only if $n \not \equiv 4^a (8 b+7)$. The behavior of ${\cal N}_n$ is very subtle \cite[Section 1]{BR&S} and it was shown in the 1930's  that ${\cal N}_n$ goes to infinity with $n$, assuming that $n$ is square-free (if $n = 4^a$ then there are only six solutions). It is known that ${\cal N}_n \ll n^{1/2+o(1)}$. If there are primitive lattice points%, that is $x = (x_1, x_2, x_3)$ with $\rm{gcd}(x_1, x_2, x_3) = 1$ 
, which happens if and only if $n \not\equiv 0, 4, 7 \; ({\rm mod} \;8 )$, then there is a lower bound ${\cal N}_n \gg n^{1/2-o(1)}$.\\ 
 
The frequency set $\Lambda_n$ is invariant under the group $W_d$ of signed permutations, consisting of coordinate permutation and sign-change of any coordinate. In particular, $\Lambda_n$ is symmetric under $\lambda \to - \lambda$, and since $0 \notin \Lambda_n$, ${\cal N}_n$ is even. Using invariance under $W_d$ in  \cite[Lemma 2.3]{R&W} it is proved the following lemma: 

\begin{lemma} \label{grouplemma}
For any subset ${\cal O} \subset \Lambda_n$ which is invariant under the group $W_d$, we have
\begin{equation} \label{RW}
\sum_{\lambda \in {\cal O} } \lambda_{(j)} \lambda_{(k)}=|{\cal O}| \frac{n }{d} \delta_{j,k}.
\end{equation}
\end{lemma}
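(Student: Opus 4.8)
The plan is to exploit the two types of generators of $W_d$ separately: a sign change of a single coordinate annihilates the off-diagonal terms, while a transposition of two coordinates forces all diagonal terms to coincide.

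First I would treat the case $j \neq k$. Let $\sigma \in W_d$ be the signed permutation that changes the sign of the $j$-th coordinate and fixes all others, so that $(\sigma\lambda)_{(j)} = -\lambda_{(j)}$ and $(\sigma\lambda)_{(k)} = \lambda_{(k)}$. Since ${\cal O}$ is $W_d$-invariant, $\sigma$ restricts to a bijection of ${\cal O}$ onto itself, and reindexing the sum by $\lambda \mapsto \sigma\lambda$ gives
$$\sum_{\lambda \in {\cal O}} \lambda_{(j)} \lambda_{(k)} = \sum_{\lambda \in {\cal O}} (\sigma\lambda)_{(j)} (\sigma\lambda)_{(k)} = - \sum_{\lambda \in {\cal O}} \lambda_{(j)} \lambda_{(k)},$$
whence the sum vanishes; this accounts for the Kronecker delta.

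For $j = k$ I would use, for each pair of indices $j, \ell$, the transposition $\tau \in W_d$ interchanging the $j$-th and $\ell$-th coordinates. Invariance of ${\cal O}$ together with the substitution $\lambda \mapsto \tau\lambda$ yields $\sum_{\lambda \in {\cal O}} \lambda_{(j)}^2 = \sum_{\lambda \in {\cal O}} \lambda_{(\ell)}^2$, so all $d$ diagonal sums equal a common constant $c$. Summing the identity $\sum_{m=1}^{d} \lambda_{(m)}^2 = \|\lambda\|^2 = n$ over $\lambda \in {\cal O}$ gives $d\,c = |{\cal O}|\,n$, i.e. $c = |{\cal O}|\,n/d$. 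Combining the two cases produces the stated identity.

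No genuine difficulty arises here: one only needs to observe that a single-coordinate sign change and a two-coordinate transposition are indeed elements of the group of signed permutations $W_d$, and that $\|\lambda\|^2 = n$ for every $\lambda \in \Lambda_n$ by the definition of the frequency set. This is the $d$-dimensional analogue of the elementary symmetry computation underlying the $d = 2$ case in \cite{R&W}.
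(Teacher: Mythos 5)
Your argument is correct and is exactly the standard symmetry proof: the single-coordinate sign change kills the off-diagonal sums, coordinate transpositions equalize the diagonal ones, and summing $\sum_m \lambda_{(m)}^2 = n$ over ${\cal O}$ pins down the common value $|{\cal O}|\,n/d$. The paper does not reproduce a proof but defers to \cite[Lemma 2.3]{R&W}, whose argument is the same $W_d$-invariance computation you give, so there is nothing to add.
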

\noindent We note that using the invariance of $\Lambda_n$ under the group $W_d$, we also immediately obtain that
\begin{align} \label{inv}
{\sum_{\lambda \in \Lambda_n}} \prod_{i=1}^d \lambda^{\alpha_i}_{(i)}  = 0,%, \hspace{1cm}  {\sum_{\lambda \in \Lambda_n}} \lambda^2_{(k)} \lambda_{(j)} \lambda_{(l)} = 0. 
\end{align}
if at least one of the exponents  $\alpha_i$ is odd.

\subsection{Equidistribution of lattice points on spheres} \label{10.7}
The classical Linnik problem \cite{D2} about the distribution of lattice points on a sphere was first introduced and discussed by Linnik in \cite{L}.

 In his book Linnik asked whether the points $\Lambda_n/ \sqrt n$, obtained by projecting the set $\Lambda_n=\{\lambda \in \mathbb{Z}^3: ||\lambda||^2=n \}$ to the unit sphere ${\cal S}^2$, become equidistributed with respect to the (normalized) Lebesgue measure $d \sigma$ on ${\cal S}^2$ as $n \to \infty$, subject to the condition that $n \not\equiv 0, 4, 7 \, ( \text{mod} \, 8)$. 
 
 Linnik was able to solve the problem using his {\it ergodic method} and assuming the Generalised Riemann
Hypothesis. The Linnik problem was solved unconditionally by Duke \cite{D, D3} and Golubeva and Fomenko \cite{G&F}, following a breakthrough by Iwaniec \cite{I} on modular forms. 

As a consequence we may approximate a summation over the lattice point set by an integral over the unit sphere as follows:  

\begin{lemma}[{\cite[Lemma 8]{P&S}}] \label{equid}
Let $g \in C^\infty({\cal S}^2)$, for every $n \not \equiv 0, 4, 7 \, ( \rm{mod} \, 8)$, we have 
\begin{equation} \label{equid_int}
\frac{1}{{\cal N}_n} \sum_{\lambda \in \Lambda_n} g\left( \frac{\lambda}{||\lambda||} \right)= \int_{{\cal S}^2} g(u) \; d \sigma(u)+O\left( \frac{1}{n^{1/28-o(1)}} \right).
\end{equation}
\end{lemma}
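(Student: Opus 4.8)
The plan is to prove Lemma \ref{equid} by reducing the equidistribution statement for a general smooth test function $g \in C^\infty(\mathcal{S}^2)$ to the case of spherical harmonics, where the quantitative input of Duke--Golubeva--Fomenko on Weyl sums can be applied directly. First I would expand $g$ in the $L^2(\mathcal{S}^2, d\sigma)$-orthonormal basis of spherical harmonics $\{Y_{\ell,m}\}$, writing $g = \sum_{\ell \geq 0} \sum_{|m| \leq \ell} \widehat{g}(\ell,m) Y_{\ell,m}$. The $\ell = 0$ term contributes exactly $\int_{\mathcal{S}^2} g \, d\sigma$ on both sides of \eqref{equid_int}, so the task is to bound the contribution of the $\ell \geq 1$ terms. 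Since $g$ is smooth, integration by parts against the spherical Laplacian gives rapid decay of the Fourier coefficients: $|\widehat{g}(\ell,m)| = O_N(\ell^{-N})$ for every $N$, uniformly in $m$.

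The core estimate is the individual Weyl-sum bound: for each fixed nonconstant spherical harmonic $Y_{\ell,m}$ one has, by the theorem of Duke and of Golubeva--Fomenko (the resolution of the Linnik problem via the subconvexity/Iwaniec half-integral-weight estimates),
\begin{equation*}
\frac{1}{\mathcal{N}_n} \sum_{\lambda \in \Lambda_n} Y_{\ell,m}\!\left( \frac{\lambda}{\|\lambda\|} \right) = O\!\left( \frac{\ell^{A}}{n^{1/28 - o(1)}} \right),
\end{equation*}
for $n \not\equiv 0,4,7 \pmod 8$, where $A$ is an absolute constant governing the polynomial dependence of the implied constant on the degree $\ell$ (this polynomial dependence is part of the effective statements; the exponent $1/28$ is the documented saving). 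Next I would sum over $\ell$ and $m$: the number of harmonics of degree $\ell$ is $2\ell+1$, so the total error is bounded by
\begin{equation*}
\sum_{\ell \geq 1} (2\ell+1) \cdot \ell^{A} \cdot \max_{|m| \leq \ell} |\widehat{g}(\ell,m)| \cdot \frac{1}{n^{1/28-o(1)}},
\end{equation*}
and the rapid decay $|\widehat{g}(\ell,m)| = O_N(\ell^{-N})$ makes the $\ell$-sum converge to a constant $C(g)$ depending only on finitely many Sobolev norms of $g$. This yields \eqref{equid_int}.

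The main obstacle is handling the dependence on the degree $\ell$ correctly: the clean statement of Linnik's problem is qualitative equidistribution (fixed $g$, $n \to \infty$), and to sum over an infinite family of harmonics one genuinely needs the implied constant in the single-harmonic bound to grow at most polynomially in $\ell$. This is available from the explicit form of Duke's estimates (via bounds on Fourier coefficients of half-integral weight Maass and holomorphic forms, where the level and weight enter polynomially), but it must be invoked carefully; alternatively, one can absorb a fixed polynomial power of $\ell$ into the $n^{o(1)}$ by truncating the harmonic expansion at $\ell \leq n^{\epsilon}$ and controlling the tail $\ell > n^{\epsilon}$ purely by smoothness of $g$ together with the trivial bound $|Y_{\ell,m}(u)| \leq C\ell^{1/2}$ and $\mathcal{N}_n \leq n^{1/2+o(1)}$. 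Either way the upshot is the stated error $O(n^{-1/28+o(1)})$, and I would remark that since this lemma is quoted from \cite[Lemma 8]{P&S}, it suffices to indicate this reduction rather than reproduce the full analytic number theory.
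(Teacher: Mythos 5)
The paper does not prove this lemma at all: it is quoted verbatim from \cite[Lemma 8]{P\&S} as a packaged consequence of the Duke and Golubeva--Fomenko resolution of Linnik's problem, so there is no internal argument to compare yours against. Your sketch is the standard derivation underlying that reference, and it is sound: expansion in spherical harmonics, extraction of the $\ell=0$ term as the main term, the individual Weyl-sum bound for nonconstant harmonics with the $n^{-1/28+o(1)}$ saving, and summation over degrees using the rapid decay of the coefficients of a $C^\infty$ function. You have also correctly isolated the one genuinely delicate point --- that the implied constant in the single-harmonic estimate must grow at most polynomially in $\ell$ for the sum over degrees to converge --- and both of your proposed fixes (invoking the explicit polynomial dependence in Duke's estimates, or truncating at $\ell\le n^{\epsilon}$ and killing the tail by smoothness together with $\|Y_{\ell,m}\|_\infty\lesssim \ell^{1/2}$ and ${\cal N}_n\le n^{1/2+o(1)}$) are viable. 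In short, your proposal supplies a correct proof outline for a statement the paper treats as a black box.
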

\vspace{0.3cm}

\noindent From Lemma \ref{equid} it immediately follows that 
\begin{lemma} \label{lambda_jk} For $k, j=1, 2, 3$, $k \ne j$, we have 
\begin{align} \label{4}
\frac{1}{ n^2 {\cal N}_n}{\sum_{\lambda \in \Lambda_n}} \lambda^4_{(k)}=\frac{1}{ 5}+O\left( \frac{1}{n^{1/28 -o(1)}}\right), 
\end{align} 
\begin{align} \label{22}
\frac{1}{ n^2 {\cal N}_n}{\sum_{\lambda \in \Lambda_n}} \lambda^2_{(k)} \lambda^2_{(j)}=\frac{1}{3 \cdot 5}+O\left( \frac{1}{n^{1/28 -o(1)}}\right). 
\end{align} 
\end{lemma}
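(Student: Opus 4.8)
The plan is to deduce Lemma \ref{lambda_jk} directly from the equidistribution statement in Lemma \ref{equid} by choosing the test function $g$ to be the appropriate homogeneous degree-$0$ polynomial on the sphere and then evaluating the resulting spherical integral. Concretely, for \eqref{4} I would apply \eqref{equid_int} with $g(u) = u_{(k)}^4$, which is smooth on ${\cal S}^2$, and for \eqref{22} with $g(u) = u_{(k)}^2 u_{(j)}^2$. Since each $\lambda \in \Lambda_n$ satisfies $\|\lambda\|^2 = n$, we have $\lambda/\|\lambda\| = \lambda/\sqrt{n}$ and hence $g(\lambda/\|\lambda\|) = \lambda_{(k)}^4/n^2$ in the first case and $\lambda_{(k)}^2 \lambda_{(j)}^2/n^2$ in the second; thus the left-hand sides of \eqref{4} and \eqref{22} are exactly the averages $\frac{1}{{\cal N}_n}\sum_{\lambda \in \Lambda_n} g(\lambda/\|\lambda\|)$, and Lemma \ref{equid} identifies each with the corresponding spherical integral up to the stated error term $O(n^{-1/28+o(1)})$.

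It then remains to compute the two integrals $\int_{{\cal S}^2} u_{(k)}^4 \, d\sigma(u)$ and $\int_{{\cal S}^2} u_{(k)}^2 u_{(j)}^2 \, d\sigma(u)$ for $k \ne j$, with $d\sigma$ the normalized surface measure. These are classical. One clean way is to use the Gaussian integration trick: if $Z = (Z_1, Z_2, Z_3)$ has i.i.d. standard normal coordinates, then $Z/\|Z\|$ is uniform on ${\cal S}^2$ and is independent of $\|Z\|$, so for a homogeneous polynomial $P$ of degree $2m$ one has $\mathbb{E}[P(Z)] = \mathbb{E}[\|Z\|^{2m}] \cdot \int_{{\cal S}^2} P(u)\, d\sigma(u)$. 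With $2m = 4$ we get $\mathbb{E}[\|Z\|^4] = \mathbb{E}[(Z_1^2+Z_2^2+Z_3^2)^2] = 3\cdot 3 + 6 \cdot 1 = 15$ (using $\mathbb{E}[Z_i^4]=3$, $\mathbb{E}[Z_i^2 Z_j^2]=1$). Since $\mathbb{E}[Z_k^4] = 3$, this gives $\int_{{\cal S}^2} u_{(k)}^4\, d\sigma = 3/15 = 1/5$, and since $\mathbb{E}[Z_k^2 Z_j^2] = 1$ for $k \ne j$, we get $\int_{{\cal S}^2} u_{(k)}^2 u_{(j)}^2\, d\sigma = 1/15 = 1/(3\cdot 5)$. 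Substituting these values yields exactly \eqref{4} and \eqref{22}. (As a sanity check, $\sum_k \int u_{(k)}^4 + \sum_{k\ne j}\int u_{(k)}^2 u_{(j)}^2 = 3\cdot\frac15 + 6\cdot\frac1{15} = 1 = \int \|u\|^4\, d\sigma$, consistent with $\|u\|=1$ on the sphere.)

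There is essentially no obstacle here: the lemma is a routine specialization of Lemma \ref{equid} plus an elementary moment computation, and the only point requiring a word of care is that the test functions $u \mapsto u_{(k)}^4$ and $u \mapsto u_{(k)}^2 u_{(j)}^2$ are indeed $C^\infty$ on ${\cal S}^2$ (they are restrictions of polynomials on $\R^3$, hence smooth), so Lemma \ref{equid} applies with the uniform error term $O(n^{-1/28+o(1)})$. If one prefers to avoid the probabilistic trick, the integrals can equally be computed in spherical coordinates $u = (\sin\theta\cos\phi, \sin\theta\sin\phi, \cos\theta)$ with $d\sigma = \frac{1}{4\pi}\sin\theta\, d\theta\, d\phi$, using $\int_0^\pi \cos^4\theta \sin\theta\, d\theta = 2/5$ and the standard $\int_0^{2\pi}\cos^4\phi\, d\phi = 3\pi/4$, $\int_0^{2\pi}\cos^2\phi\sin^2\phi\, d\phi = \pi/4$; either route gives the same constants $1/5$ and $1/15$.
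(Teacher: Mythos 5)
Your proposal is correct and follows essentially the same route as the paper: both apply Lemma \ref{equid} to the test functions $u\mapsto u_{(k)}^4$ and $u\mapsto u_{(k)}^2u_{(j)}^2$ and then evaluate the resulting spherical integrals. The only (cosmetic) difference is that you compute the integrals via Gaussian moments, whereas the paper uses spherical coordinates directly — a route you also supply — and both give the constants $1/5$ and $1/15$.
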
 
\begin{proof}
By invariance under the group $W_3$, the integrand, at the right-hand side of \eqref{equid_int}, is a function of only one angle. We apply Lemma \ref{equid} and we obtain  
\begin{align*}
\frac{1}{ n^2 {\cal N}_n}{\sum_{\lambda \in \Lambda_n}} \lambda^4_{(k)}&=\frac{1}{ {\cal N}_n}{\sum_{\lambda \in \Lambda_n}} \left(\frac{\lambda_{(k)}}{||\lambda||}\right)^4\\
& = \frac{1}{4 \pi} \int_0^{\pi}  d \phi_1  \int_{0}^{2 \pi} d \phi_2  \cos^4 \phi_1 \sin \phi_1   +  O\left( \frac{1}{n^{1/28-o(1)}} \right) \\
&=  \frac{2 \pi}{4 \pi}  \frac{2}{5}+  O\left( \frac{1}{n^{1/28-o(1)}} \right). 
\end{align*} 
The proof of \eqref{22} is similar.  
\end{proof}
Malyshev \cite{M} and Pommerenke \cite{P} have established the analogue of Lemma \ref{equid} for integral positive quadratic forms in more than three variables. 
\subsection{Spectral correlations} \label{spectral}

For $\ell \ge 2$, we denote by ${\cal C}={\cal C}_n(\ell)$ the set of $d$-dimensional $\ell$-{\it correlations}: 
$${\cal C}_n(\ell)=\{ (\lambda_1, \dots, \lambda_{\ell}) \in ({\cal E}_n)^{\ell} : \, \sum_{i=1}^{\ell} \lambda_i=0 \}.$$
The set of {\it non-degenerate} $\ell$-correlations ${\cal X}={\cal X}_n(\ell)$ is the subset of ${\cal C}$ defined by 
$${\cal X}_n(\ell)=\{ (\lambda_1, \dots, \lambda_{\ell}) \in {\cal C}_n(\ell): \; \forall {\cal H} \subset  \{1, \dots, \ell \}, \sum_{i \in {\cal H}}  \lambda_i \ne 0  \},$$
and we denote by ${\cal D}={\cal C} \setminus {\cal X}$ the set of {\it degenerate} correlations. For $d>2$ a summation over ${\cal C}(4)$ may be treated by separating it as follows

\begin{align} \label{M}
\sum_{{\cal C}(4)} &= \sum_{\substack {\lambda_1=-\lambda_2 \\ \lambda_3 = - \lambda_4 }} +  \sum_{ \substack{\lambda_1=-\lambda_3 \\ \lambda_2= - \lambda_4 }} +  \sum_{ \substack {\lambda_1=-\lambda_4 \\ \lambda_2 = - \lambda_3}} \\
&\;\;\;- \sum_{\lambda_1=-\lambda_2=\lambda_3=-\lambda_4} -  \sum_{\lambda_1=\lambda_2=-\lambda_3=-\lambda_4 } -  \sum_{\lambda_1=- \lambda_2=-\lambda_3=\lambda_4 }     \nonumber  \\
&\;\;\;+ \sum_{{\cal X}(4)}.   \nonumber 
\end{align}

\noindent Note that if $d=2$ the set ${\cal X}(4)$ is empty. The next lemma deals with the $3$-dimensional setting and provides an estimate for the number of non-degenerate correlations: 
\begin{lemma}[{\cite[Theorem 1.5]{B&M}}] \label{JB}
Letting $n \to \infty$, one has the estimate 
\begin{align*}
|{\cal X}_n(4)| \ll {\cal N_n}^{7/4+o(1)}.
\end{align*}
\end{lemma}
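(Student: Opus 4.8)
The plan is to recast $|\mathcal{X}_n(4)|$ as an off-diagonal additive energy of $\Lambda_n$ and then bound it geometrically. For $v\in\mathbb{Z}^3$ put $r(v)=\#\{(\lambda,\mu)\in\Lambda_n^2:\lambda+\mu=v\}$. Grouping a quadruple with $\lambda_1+\lambda_2=v$ and $\lambda_3+\lambda_4=-v$, and using $r(-v)=r(v)$ (from $\Lambda_n=-\Lambda_n$), gives $|\mathcal{C}_n(4)|=\sum_v r(v)^2$, while $r(0)=\mathcal{N}_n$. Read as an identity of cardinalities, the decomposition \eqref{M} is $\sum_v r(v)^2=3\mathcal{N}_n^2-3\mathcal{N}_n+|\mathcal{X}_n(4)|$. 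Hence the assertion is equivalent to the cancellation estimate $\sum_v r(v)^2-3\mathcal{N}_n^2\ll\mathcal{N}_n^{7/4+o(1)}$: the three diagonal pairings supply the main term $3\mathcal{N}_n^2$, and the whole problem is to control the surplus.

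First I would interpret $r(v)$ geometrically. A point $\lambda\in\Lambda_n$ satisfies $v-\lambda\in\Lambda_n$ exactly when $\langle\lambda,v\rangle=\tfrac12\|v\|^2$, so $r(v)$ equals the number of lattice points on the circle $C_v=\{x:\|x\|^2=n,\ \langle x,v\rangle=\tfrac12\|v\|^2\}$, the section of the sphere $\|x\|^2=n$ by a plane with centre $v/2$; the pairs summing to $v$ are precisely the antipodal pairs of $C_v$. A short computation with $\langle\lambda_1-\lambda_3,\lambda_2-\lambda_3\rangle$ (Thales' theorem) shows that $\sum_{v\neq0}r(v)(r(v)-2)$ counts, up to an $O(\mathcal{N}_n^{1+o(1)})$ term, the right-angled triples $(\lambda_1,\lambda_2,\lambda_3)\in\Lambda_n^3$ with right angle at $\lambda_3$. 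In particular every circle carrying only one antipodal pair, i.e. $r(v)=2$, contributes nothing: this is the geometric origin of the cancellation of the main term $3\mathcal{N}_n^2$. It therefore suffices to bound the contribution of the \emph{rich} circles, those with $r(v)\geq3$.

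The estimate then rests on two ingredients. The first is a uniform pointwise bound $r(v)\ll n^{o(1)}$: the integer points of the plane carrying $C_v$ form a rank-two lattice, and the number of them on a fixed circle is governed by a binary-form representation (divisor) bound, uniformly in the plane. The second, decisive, ingredient is a count of rich circles: setting $N_m=\#\{v:r(v)\geq m\}$, I would run a dyadic decomposition $\sum_{r(v)\geq3}r(v)^2\asymp\sum_{m\text{ dyadic}}m^2N_m$ and aim for $N_m\ll\mathcal{N}_n^{7/4+o(1)}m^{-2}$, which together with $r(v)\ll n^{o(1)}$ yields the claimed exponent (and is consistent with the first-moment identity $\sum_v r(v)=\mathcal{N}_n^2$, since the rich circles then carry only $\mathcal{N}_n^{7/4}$ of all pairs, the bulk living on the $r(v)=2$ circles). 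The bound on $N_m$ is where the arithmetic of lattice points on the sphere enters, via the effective equidistribution of Duke \cite{D,D3} and Golubeva--Fomenko \cite{G&F}; this subconvex input should produce both the $n^{o(1)}$ and, after optimisation, the exponent $7/4$.

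The main obstacle is exactly the bound on $N_m$, namely that the lattice points of the sphere do not concentrate on too many circles. Any purely local argument — bounding, for each fixed point or fixed pair, the number of completions lying on the associated plane section — only reproduces the trivial estimate $|\mathcal{X}_n(4)|\ll\mathcal{N}_n^{2+o(1)}$, because each orthogonality constraint combined with membership in $\Lambda_n$ again lands one on a circle carrying $n^{o(1)}$ points, and there are just enough constraints to collapse to $\mathcal{N}_n^2$. Gaining the extra factor $\mathcal{N}_n^{1/4}$ demands a genuinely global statement controlling how many planes meet the sphere in three or more lattice points; this is the technical heart of \cite[Theorem 1.5]{B&M}. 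I expect this rich-planes count, together with extracting the sharp exponent $7/4$ from the equidistribution error term, to be the principal difficulty, whereas the reductions of the first two paragraphs are routine.
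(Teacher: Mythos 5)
You should note at the outset that the paper does not prove this statement at all: Lemma \ref{JB} is imported as a black box, cited as \cite[Theorem 1.5]{B&M}, so the only meaningful comparison is between your proposal and the proof in \cite{B&M}. Your reductions are correct: the identity $|{\cal C}_n(4)|=\sum_v r(v)^2$, the inclusion--exclusion reading of \eqref{M} giving $|{\cal C}_n(4)|=3{\cal N}_n^2-3{\cal N}_n+|{\cal X}_n(4)|$, hence $|{\cal X}_n(4)|=\sum_{v\neq 0}r(v)\left(r(v)-2\right)+O({\cal N}_n)$, the interpretation of $r(v)$ as the number of lattice points on the plane section $C_v$, the Thales correspondence with right-angled triples, and the divisor-type bound $r(v)\ll n^{o(1)}$ are all sound. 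But they amount to a restatement, not a proof: once $r(v)\ll n^{o(1)}$ is known, the lemma is equivalent (up to $n^{o(1)}$ factors) to your rich-circle count $N_3=\#\{v: r(v)\geq 3\}\ll {\cal N}_n^{7/4+o(1)}$, and this is precisely the step you leave unproven, deferring it as ``the technical heart of \cite[Theorem 1.5]{B&M}''. A proof attempt whose decisive estimate is the theorem itself, in equivalent form, has a genuine gap; as you yourself observe, every local argument available at that point collapses back to the trivial bound ${\cal N}_n^{2+o(1)}$.

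Moreover, the route you suggest for closing the gap --- extracting the exponent $7/4$ from the Duke \cite{D, D3} and Golubeva--Fomenko \cite{G&F} equidistribution --- is not viable in principle, not merely difficult. Duke-type equidistribution carries a fixed polynomial power saving in the discrepancy, so it controls point counts only in sets of normalized measure at least $n^{-\delta_0}$ for some small $\delta_0$; circles have measure zero, and no statement of that quality can bound the number of planes meeting the sphere in four or more lattice points. Indeed equidistribution is perfectly consistent with the failure of the lemma: one could have, say, ${\cal N}_n^{15/8}$ pairs lying on rich circles while $\Lambda_n/\sqrt{n}$ remains equidistributed, since ${\cal N}_n^{15/8}=o({\cal N}_n^2)$ and the offending configurations are invisible at any positive-measure scale. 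The actual proof in \cite{B&M} draws its saving of ${\cal N}_n^{1/4}$ from a different source: diophantine-geometric estimates for lattice points in spherical caps and on plane sections, in the spirit of Bourgain--Rudnick--Sarnak \cite{BR&S}, combined with divisor bounds and a case analysis on the length of $v=\lambda_1+\lambda_2$. In the present paper, equidistribution enters only through Lemma \ref{equid} and Lemma \ref{lambda_jk}, i.e.\ for the moments of coordinates of lattice points, and plays no role in the correlation bound.
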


\section{Chaotic expansion of ${\cal V}_n$} 

\subsection{Wiener chaos expansion} \label{wiener}

The celebrated Wiener chaos expansion \cite{W} concerns the representation of square
integrable random variables in terms of an infinite orthogonal sum. In this section we recall briefly some basic facts on Wiener chaotic expansion for non-linear functionals of Gaussian fields, we refer to \cite{N&P} for an exhaustive discussion. 

Denote by $\{H_k\}_{k \ge 0}$ the Hermite polynomials on $\mathbb{R}$, defined as follows 
\begin{align} \label{hermite}
H_0 =1, \hspace{0.5cm} H_k(t)=(-1)^k \gamma^{-1}(t) \frac{d^k}{d t^k} \gamma(t), \;\; k \ge 1,
\end{align} 
where $\gamma(t)=e^{-t^2/2}/\sqrt{2 \pi}$ is the standard Gaussian density on the real line; $\mathbb{H}= \{ H_k / \sqrt{k!}: \; k \ge 0\}$ is a complete orthogonal system in 
$$L^2(\gamma)= L^2(\mathbb{R}, {\cal B} (\mathbb{R}), \gamma(t) d t).$$

The random eigenfunctions $f_n$ defined in \eqref{fn} are a byproduct of the family of complex-valued, Gaussian random variables $\{a_{\lambda}\}$, defined on some probability space $(\Omega, {\cal F},\mathbb{P})$.  Following the discussion in \cite{MPRW} we define the space ${\bf A}$ to be the closure in $L^2(\mathbb{P})$ generated by all real, finite, linear combinations of random variables of the form $z a_{\lambda}+ \overline z a_{-\lambda}$, $z \in \mathbb{C}$; the space ${\bf A}$ is a real, centred, Gaussian Hilbert subspace of $L^2(\mathbb{P})$.  \\

For each integer $q \ge 0$, the $q$-th {\it Wiener chaos} ${\cal H}_q$ associated with ${\bf A}$ is the closed linear subspace of $L^2(\mathbb{P})$ generated by all real, finite, linear combinations of random variables of the form 
$$H_{q_1}(a_1) \cdot H_{q_2}(a_2) \cdots H_{q_k}(a_k)$$
for $k \ge 1$, where the integers $q_1, q_2, \dots, q_k \ge 0$ satisfy $q_1+q_2+ \cdots+q_k=q$ and $(a_1,a_2,\dots,a_k)$ is a real, standard, Gaussian vector extracted from ${\bf A}$. In particular ${\cal H}_0=\mathbb{R}$.\\

As well-known Wiener chaoses $\{{\cal H}_q, \, q=0,1,2, \dots\}$ are orthogonal \cite[Theorem 2.2.4]{N&P}, i.e., ${\cal H_q} \perp {\cal H_p}$ for $p \ne q$ (the orthogonality holds in the sense of $L^2(\mathbb{P})$) and the following decomposition holds: every real-valued function $F \in {\bf A}$ admits a unique expansion of the type 
\begin{align*} F = \sum_{q=0}^{\infty} F[q],
\end{align*}
where the projections $F[q] \in {\cal H}_q$ for every $q=0,1,2,\dots$, and the series converges in $L^2(\mathbb{P})$. Note that $F[0]=\mathbb{E}[F]$.

\subsection{Chaotic expansion of ${\cal V}_n$} 

In this section we derive the explicit form for the projections in the chaos decomposition of the nodal volume ${\cal V}_n$. For the proof of  Lemma \ref{caotic} we follow closely \cite[Section 3]{MPRW} where the chaotic expansion of the nodal length of arithmetic random waves is derived.

\subsubsection{Approximating the nodal volume}

Let $1_{[-\varepsilon, \varepsilon]}$ be the indicator function of the interval $[-\varepsilon, \varepsilon]$ and $||\cdot||$ the standard Euclidean norm in $\mathbb{R}^d$. We define for $\varepsilon>0$ 
\begin{align*} 
{\cal V}^{\varepsilon}_n= \frac{1}{2 \varepsilon} \int_{\mathbb{T}^d} 1_{[-\varepsilon, \varepsilon]} (f_n(x)) \; || \nabla f_n(x)|| d x. 
\end{align*}
 It was shown in \cite[Lemma 3.1]{R&W} that a.s.  
\begin{align} \label{uuno} 
{\cal V}_n=\lim_{\varepsilon \to \infty} {\cal V}^{\varepsilon}_n
\end{align}
and that \cite[Lemma 3.2]{R&W} ${\cal V}^{\varepsilon}_n$ is uniformly bounded, that is  
\begin{align} \label{ddue}
{\cal V}^{\varepsilon}_n \le 6 \, d \sqrt{E_n}.
\end{align} 

The Dominated Convergence Theorem and the uniform bound \eqref{ddue} imply that the convergence in \eqref{uuno} is in $L^2(\mathbb{P})$, i.e., for every $n \in S$ 
\begin{align} \label{ttre}
\lim_{\varepsilon \to 0} \mathbb{E}[ | {\cal V}^{\varepsilon}_n - {\cal V}_n  |^2 ]=0.
\end{align}

\subsubsection{Chaos expansion}
In view of \eqref{ttre}, we first compute the chaotic expansion of ${\cal V}^{\varepsilon}_n$, then the expansion of ${\cal V}_n$ in Lemma \ref{caotic} follows by letting $\varepsilon \to 0$. In Lemma  \ref{caotic} we prove that all odd-order chaotic components in the Wiener chaos expansion of ${\cal V}_n$ vanish and we derive an explicit expression for the even-order chaotic components:

\begin{lemma} \label{caotic}  The Wiener chaos expansion of ${\cal V}_n$ is
%\begin{align*}
${\cal V}_n=\mathbb{E}[{\cal V}_n]+ \sum_{q=1}^{\infty} {\cal V}_n[q]$
%\end{align*}
 in $L^2(\mathbb{P})$; where ${\cal V}_n[2 q+1]=0$ for $q \ge 1$, and for $q > 1$
\begin{align*} %\label{17:59}
{\cal V}_n[2 q]&=2 \pi   \sqrt{ \frac{n }{d} }    \sum_{p=0}^{q}    \frac{\beta_{2 q-2p}}{(2q-2p)!}   \hspace{-0.2cm} \sum_{\stackrel{s \in {\mathbb N}^d}{s_1+ \cdots+s_d=p}} a(2 s)  \int_{\mathbb{T}^d} H_{2q-2p}(f_n(x))  \prod_{j=1}^d H_{2 s_j} (f_{n,j}(x)) d x;
\end{align*}
with
\begin{align} 
& \beta_{2 m-2p}=\frac{1}{\sqrt{2 \pi}} H_{2 m-2p }(0),  \label{18:30} \\
&a(2q)= \sum_{i=0}^{\infty} \frac{1}{i!\; 2^i }  \frac{ \sqrt 2 \Gamma(\frac d 2 +i+ \frac 1 2) }{ \Gamma(\frac{d}{2}+i) } \sum_{j_1+\cdots+j_d=i} \binom{i}{j_1, \dots , j_d}  \frac{(-1)^{q_1-j_1+\cdots+q_d-j_d}}{(q_1-j_1)! \cdots (q_d-j_d)! 2^{q_1-j_1+\cdots+q_d-j_d}}. \label{18:48}
\end{align}
\end{lemma}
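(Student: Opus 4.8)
The plan is to follow the strategy of \cite[Section 3]{MPRW} and derive the chaos expansion by first expanding the approximating functional ${\cal V}^\varepsilon_n$, then passing to the limit $\varepsilon \to 0$ using \eqref{ttre}. The starting point is the Kac--Rice type representation
\[
{\cal V}^\varepsilon_n = \frac{1}{2\varepsilon} \int_{\mathbb{T}^d} 1_{[-\varepsilon,\varepsilon]}(f_n(x)) \, \|\nabla f_n(x)\| \, dx,
\]
and the key observation that both factors in the integrand are functions of the Gaussian vector $(f_n(x), f_{n,1}(x), \dots, f_{n,d}(x))$, whose components are pointwise independent: indeed $\mathbb{E}[f_n(x)\partial_k f_n(x)] = 0$ by \eqref{inv} (the relevant sum has an odd exponent), and $\mathbb{E}[\partial_j f_n(x) \partial_k f_n(x)] \propto \delta_{j,k}$ by Lemma \ref{grouplemma}, so after normalization $(f_n(x), f_{n,1}(x),\dots,f_{n,d}(x))$ is a standard $(d+1)$-dimensional Gaussian vector for each fixed $x$.

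Next I would expand each factor separately in the Hermite basis with respect to the standard Gaussian measure. For the indicator factor, $\frac{1}{2\varepsilon} 1_{[-\varepsilon,\varepsilon]}(\cdot)$ has a Hermite expansion whose coefficients converge, as $\varepsilon \to 0$, to $\beta_{2m} = H_{2m}(0)/\sqrt{2\pi}$ on even indices (odd coefficients vanish by parity), giving the $\delta_0$-expansion $\sum_m \frac{\beta_{2m}}{(2m)!} H_{2m}(f_n(x))$. For the norm factor, I would write $\|\nabla f_n(x)\| = \sqrt{\frac{4\pi^2 n}{d}} \, \|(f_{n,1}(x),\dots,f_{n,d}(x))\|$ and compute the Hermite expansion of the function $(y_1,\dots,y_d) \mapsto \|y\| = \sqrt{y_1^2 + \cdots + y_d^2}$ against the standard $d$-dimensional Gaussian. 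Writing $\|y\|$ via its radial structure and using the series expansion of $\sqrt{\cdot}$ together with the multinomial expansion of $(y_1^2+\cdots+y_d^2)^i$, one reads off that this expansion is supported on multi-indices with even entries, and the coefficients are exactly the $a(2s)$ given in \eqref{18:48}; the factor $\frac{\sqrt 2 \,\Gamma(d/2 + i + 1/2)}{\Gamma(d/2+i)}$ arises from the Gaussian moments $\mathbb{E}[\|y\|\, y^{2\alpha}]$ evaluated in spherical coordinates. Multiplying the two expansions, using pointwise independence so that $\mathbb{E}[H_a(f_n) \prod_j H_{s_j}(f_{n,j})]$ factorizes, integrating over $\mathbb{T}^d$, and collecting terms of total Hermite degree $2q$ yields the claimed formula, with the odd-degree terms absent because every contributing product has only even Hermite indices. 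Finally, $L^2$-convergence of ${\cal V}^\varepsilon_n$ from \eqref{ttre} transfers to $L^2$-convergence of each chaotic projection (projections are continuous on $L^2(\mathbb{P})$), giving the expansion for ${\cal V}_n$.

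The main technical obstacle is the bookkeeping in identifying the coefficients $a(2q)$: one must carefully expand $\sqrt{y_1^2+\cdots+y_d^2}$, match the resulting multi-index sums against products of one-dimensional Hermite polynomials $\prod_j H_{2s_j}(y_j)$ rather than against monomials, and track the alternating signs and factorials coming from the change of basis between monomials and Hermite polynomials in each coordinate. This is where the inner sum over $j_1 + \cdots + j_d = i$ with the multinomial coefficient $\binom{i}{j_1,\dots,j_d}$ and the terms $(-1)^{q_k - j_k}/((q_k-j_k)! \, 2^{q_k - j_k})$ in \eqref{18:48} originate. Since this is a $d$-dimensional generalization of the $d=2$ computation in \cite[Proposition 3.2]{MPRW}, the argument is structurally the same but the combinatorics are heavier; I would relegate the full verification to Appendix \ref{caotic_A} and in the main text only indicate the independence structure and the two separate one-factor expansions. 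One should also check the uniform integrability/interchange needed to justify term-by-term integration over $\mathbb{T}^d$ and the convergence of the Hermite series, which follows from stationarity of $f_n$ (so the integrand has finite $L^2$ norm uniformly) exactly as in \cite{MPRW}.
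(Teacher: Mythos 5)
Your overall architecture coincides with the paper's: approximate ${\cal V}_n$ by ${\cal V}_n^\varepsilon$, use the pointwise independence of the standardized vector $(f_n(x), f_{n,1}(x),\dots,f_{n,d}(x))$ to expand the indicator factor and the norm factor separately in the Hermite basis, multiply, collect terms by total degree, and pass to the limit $\varepsilon\to 0$ via \eqref{ttre}; the parity arguments for the vanishing of the odd chaoses and the limits $\beta_{2k}=H_{2k}(0)/\sqrt{2\pi}$ are likewise exactly as in Appendix \ref{caotic_A}. Where you genuinely diverge is in the computation of the coefficients $a(2s)$, which is the substantive content of that appendix. You propose to evaluate the Gaussian integrals $\mathbb{E}\big[\,||Z||\prod_j H_{2s_j}(Z_j)\big]$ directly, expanding the Hermite polynomials into monomials and computing $\mathbb{E}[\,||Z||\,Z^{2\alpha}]$ in spherical coordinates. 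That route is viable in principle, but it produces a single sum indexed by the monomial multi-indices, and matching it to the double-sum form \eqref{18:48} (an outer free index $i$ together with an inner multinomial sum over $j_1+\cdots+j_d=i$) requires a further combinatorial reorganization that you flag as the main obstacle but do not carry out; note also that a literal ``series expansion of $\sqrt{\cdot}$'' is not available, since $\sqrt{t}$ is not analytic at $t=0$. The paper instead computes $\mathbb{E}[\,||U||\,]$ for $U\sim N(\mu, I_d)$ in two ways --- once via the Poisson-mixture (noncentral chi-square) representation of $||U||^2$, which is where the free index $i$ and the factor $\sqrt 2\,\Gamma(\frac d2+i+\frac12)/\Gamma(\frac d2+i)=\mathbb{E}[\textrm{{\Large$\chi$}}_{d+2i}]$ actually originate, and once via the generating function $e^{cx-c^2/2}=\sum_l c^l H_l(x)/l!$ --- and then reads off $a(2s)$ by matching coefficients of $\mu_1^{2l_1}\cdots\mu_d^{2l_d}$, thereby bypassing the Hermite-to-monomial change of basis entirely. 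Your plan is structurally sound, but as written the identification of the norm coefficients with \eqref{18:48} is asserted rather than derived.
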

The proof of Lemma \ref{caotic} is postponed to Appendix \ref{caotic_A}.

\subsubsection{Second and fourth order chaos}

For $j,k =1, \dots, d$, we denote by $s(j)$ the vector in $\mathbb{R}^d$ with $j$-th component equal to $1$ and all the other components equal to $0$, and we denote by  $s(j,k)$ the vector in  $\mathbb{R}^d$ with $j$-th and $k$-th components equal to $1$ and all the other components equal to $0$. To evaluate the second and fourth order chaos we need the following lemma: 
\begin{lemma} \label{aeb}
\begin{align*}
\beta_0=\frac{1}{\sqrt{2 \pi}},\;\;\;\; \beta_2=-\frac{1}{\sqrt{2 \pi}},\;\;\;\; \beta_4=\frac{3}{\sqrt{2 \pi}},
\end{align*}
for $k=1, \dots, d$, 
\begin{align*}
a(0)=\sqrt 2 \frac{\Gamma(\frac{d+1} 2 )}{\Gamma(\frac d 2)}, \;\;\; a(2 s(k))=\frac{1}{2 \sqrt 2 } \frac{\Gamma(\frac{d+1}2)}{\Gamma(\frac{d+2}2)}, \;\;\; a(4 s(k))=-\frac{1}{2^4 \sqrt 2 } \frac{\Gamma(\frac{d+1}2)}{\Gamma(\frac{d+4}2)},  
\end{align*}
and for $j \ne k$, $j,k =1, \dots, d$, 
\begin{align*}
a(2 s(j,k))=-\frac{1}{2^3 \sqrt 2 } \frac{\Gamma(\frac{d+1} 2)}{\Gamma(\frac{d+4}2)}.
\end{align*}
\end{lemma}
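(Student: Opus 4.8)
The plan is to evaluate both families of constants directly from their definitions \eqref{18:30} and \eqref{18:48}, exploiting the fact that for the particular multi-indices appearing in Lemma \ref{aeb} the seemingly infinite series defining $a(2q)$ truncates after finitely many terms. There is no conceptual content here: the statement is a bookkeeping computation, and the only structural observation needed is a truncation that I explain below.

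For the $\beta$'s nothing is required beyond the probabilists' Hermite polynomials. First I would record $H_0(t)=1$, $H_2(t)=t^2-1$ and $H_4(t)=t^4-6t^2+3$, whence $H_0(0)=1$, $H_2(0)=-1$, $H_4(0)=3$; substituting into $\beta_{2m-2p}=\tfrac{1}{\sqrt{2\pi}}H_{2m-2p}(0)$ yields the three stated values at once.

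The substantive part is the evaluation of $a(0)$, $a(2s(k))$, $a(4s(k))$ and $a(2s(j,k))$. The key observation is that in \eqref{18:48} the factor $1/\big((q_1-j_1)!\cdots(q_d-j_d)!\big)$ forces $0\le j_\ell\le q_\ell$ for every $\ell$: any term with some $j_\ell>q_\ell$ carries the reciprocal factorial of a negative integer and vanishes. Consequently the inner multi-index sum runs only over $j\le q$ componentwise, and the outer index $i=j_1+\cdots+j_d$ is bounded by $|q|=q_1+\cdots+q_d$. For $a(0)$ we have $q=0$, so only $i=0$ survives and the formula collapses to $a(0)=\sqrt2\,\Gamma(\tfrac{d+1}2)/\Gamma(\tfrac d2)$. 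For $a(2s(k))$ we have $|q|=1$, giving two terms ($i=0,1$); for $a(4s(k))$ and $a(2s(j,k))$ we have $|q|=2$, giving three contributing configurations of $(j_1,\dots,j_d)$ in each case.

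Having reduced each constant to a short finite sum, I would simplify the Gamma-function ratios via $\Gamma(z+1)=z\Gamma(z)$, writing everything over the common factor $\Gamma(\tfrac{d+1}2)/\Gamma(\tfrac d2)$; for instance $\Gamma(\tfrac{d+3}2)/\Gamma(\tfrac{d+2}2)=\tfrac{d+1}{d}\,\Gamma(\tfrac{d+1}2)/\Gamma(\tfrac d2)$ and $\Gamma(\tfrac{d+5}2)/\Gamma(\tfrac{d+4}2)=\tfrac{(d+1)(d+3)}{d(d+2)}\,\Gamma(\tfrac{d+1}2)/\Gamma(\tfrac d2)$. The only point requiring care is the final algebra: in both quadratic cases the coefficients combine so that the numerator $d(d+2)-2(d+1)(d+2)+(d+1)(d+3)$ appears, and I would check that this collapses identically to $-1$. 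That single cancellation produces $a(4s(k))=-\tfrac{1}{2^4\sqrt2}\,\Gamma(\tfrac{d+1}2)/\Gamma(\tfrac{d+4}2)$ and $a(2s(j,k))=-\tfrac{1}{2^3\sqrt2}\,\Gamma(\tfrac{d+1}2)/\Gamma(\tfrac{d+4}2)$ after re-expressing the common factor through $\Gamma(\tfrac{d+4}2)=\tfrac{d(d+2)}4\Gamma(\tfrac d2)$; the two-term sum for $a(2s(k))$ is handled the same way. I expect no genuine obstacle, the computation being entirely mechanical once the truncation is noted; the only real risk is keeping the multinomial coefficients (note $\binom{2}{1,1}=2$ in the mixed case $a(2s(j,k))$, versus $\binom{2}{2}=1$ in $a(4s(k))$) and the signs $(-1)^{q_\ell-j_\ell}$ straight across the finitely many terms.
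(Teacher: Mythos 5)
Your proposal is correct and follows essentially the same route as the paper: the paper also evaluates the $\beta$'s from \eqref{18:30}, truncates the series \eqref{18:48} at $i\le q_1+\cdots+q_d$ (writing $\gamma(i)=\frac{1}{i!\,2^i}\frac{\sqrt2\,\Gamma(\frac d2+i+\frac12)}{\Gamma(\frac d2+i)}$ and obtaining the finite combinations $-\frac{\gamma(0)}{2}+\gamma(1)$, $\frac{\gamma(0)}{2^3}-\frac{\gamma(1)}{2}+\gamma(2)$ and $\frac{\gamma(0)}{2^2}-\gamma(1)+2\gamma(2)$), and simplifies via the Gamma recursion exactly as you do; your identification of the common cancellation $d(d+2)-2(d+1)(d+2)+(d+1)(d+3)=-1$ is the correct final algebra, and your multinomial bookkeeping ($\binom{2}{1,1}=2$ in the mixed case) matches the paper's coefficients. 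The only cosmetic difference is that the paper computes $a(0)$ via the probabilistic identity $a(0)=\mathbb{E}\left[\|Z\|\right]$ and \eqref{18:41} with $\mu=0$, whereas you read it off the series at $q=0$; these are the same evaluation.
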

The proof of Lemma \ref{aeb} is postponed to Appendix \ref{2&4_ch_A}. Using Lemma \ref{aeb} we easily see that we can rewrite the second-order chaos and the fourth-order chaos as follows 
\begin{align}
{\cal V}_n[2] &= \sqrt{\frac{n \pi}{d}} \Gamma\left(\frac{d+1}2\right) \Big[ -  \frac{1}{\Gamma(\frac d 2)} \int_{\mathbb{T}^d} H_2 (f_n(x)) d x + \frac{1}{2 } \frac{1}{\Gamma(\frac{d+2} 2)} \sum_{k=1}^d     \int_{\mathbb{T}^d} H_2(f_{n,k}(x)) dx \Big], \nonumber \\
{\cal V}_n[4]&=\sqrt{ \frac{n\pi }{d} }   \Gamma\left(\frac{d+1} 2\right)  \Big[ \frac{1}{2^2  \Gamma(\frac{d}{2})} \int_{\mathbb{T}^d} H_{4}(f_n(x)) d x  \label{17lug} \\
&\;\;  \hspace{3.2cm}-   \frac{1}{ 2^2 \Gamma(\frac{d+2}2)}
\sum_{k=1}^d \int_{\mathbb{T}^d} H_2(f_n(x)) H_2(f_{n,k}(x)) d x  \nonumber \\
&\;\; \hspace{3.2cm} -    \frac{1}{ 2^4 \Gamma(\frac{d+4}2)}  \sum_{k=1}^d \int_{\mathbb{T}^d} H_4(f_{n,k}(x)) d x  \nonumber \\
&\;\; \hspace{3.2cm} -  \frac{1}{ 2^3 \Gamma(\frac{d+4}2)} \sum_{j < k}  \int_{\mathbb{T}^d} H_2(f_{n,j}(x)) H_2(f_{n,k}(x)) d x\Big].  \nonumber 
\end{align}

We first prove that the second-order chaotic projection vanishes, i.e.  ${\cal V}_n[2]=0$. This is an immediate consequence of the following lemma
\begin{lemma} \label{14:20} 
\begin{align}
& \int_{\mathbb{T}^d}  H_{2}(f_n(x))  d x= \frac{1}{\sqrt{{\cal N}_n}} W(n), \label{11st}\\
&\sum_{k=1}^d  \int_{\mathbb{T}^d}  H_{2}(f_{n,k}(x))  d x= \frac{d}{\sqrt{{\cal N}_n}} W(n). \label{22nd} 
\end{align}
\end{lemma}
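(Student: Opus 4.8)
\textbf{Proof plan for Lemma \ref{14:20}.}

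The plan is to expand both Hermite integrals directly using the definitions of $f_n$ and $f_{n,k}$ in \eqref{fn} and \eqref{norm_d}, and to exploit the orthogonality relations $\int_{\mathbb{T}^d} e(\langle \lambda_1 + \lambda_2, x\rangle)\, dx = \delta_{\lambda_1, -\lambda_2}$ together with Lemma \ref{grouplemma}. Since $H_2(t) = t^2 - 1$ and $\int_{\mathbb{T}^d} 1\, dx = 1$, we have $\int_{\mathbb{T}^d} H_2(f_n(x))\, dx = \int_{\mathbb{T}^d} f_n(x)^2\, dx - 1$. Writing $f_n(x)^2$ as the double sum $\frac{1}{{\cal N}_n}\sum_{\lambda_1, \lambda_2 \in \Lambda_n} a_{\lambda_1} a_{\lambda_2} e(\langle \lambda_1 + \lambda_2, x\rangle)$ and integrating term by term kills all terms except $\lambda_2 = -\lambda_1$, leaving $\frac{1}{{\cal N}_n}\sum_{\lambda \in \Lambda_n} a_{\lambda} a_{-\lambda} = \frac{1}{{\cal N}_n}\sum_{\lambda \in \Lambda_n} |a_{\lambda}|^2$, using the reality constraint $a_{-\lambda} = \overline{a_{\lambda}}$. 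Subtracting $1 = \frac{1}{{\cal N}_n}\sum_{\lambda \in \Lambda_n} 1$ gives $\frac{1}{{\cal N}_n}\sum_{\lambda \in \Lambda_n}(|a_{\lambda}|^2 - 1)$, which is precisely $\frac{1}{\sqrt{{\cal N}_n}} W(n)$ by the definition of $W(n)$. This proves \eqref{11st}.

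For \eqref{22nd} I would carry out the analogous computation for each $f_{n,k}$. Since $f_{n,k}$ is real-valued (as noted after \eqref{norm_d}) and has unit variance, $\int_{\mathbb{T}^d} H_2(f_{n,k}(x))\, dx = \int_{\mathbb{T}^d} f_{n,k}(x)^2\, dx - 1$. Expanding $f_{n,k}(x)^2$ from \eqref{norm_d} gives $\frac{d}{n {\cal N}_n}\sum_{\lambda_1, \lambda_2 \in \Lambda_n} \lambda_{1,(k)} \lambda_{2,(k)} (-1) \cdot (-1)\cdot$ — more carefully, $f_{n,k}(x) = i\sqrt{d/(n{\cal N}_n)}\sum_\lambda \lambda_{(k)} a_\lambda e(\langle\lambda,x\rangle)$, so $f_{n,k}(x)^2 = -\frac{d}{n{\cal N}_n}\sum_{\lambda_1,\lambda_2} \lambda_{1,(k)}\lambda_{2,(k)} a_{\lambda_1}a_{\lambda_2} e(\langle\lambda_1+\lambda_2,x\rangle)$. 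Integrating over $\mathbb{T}^d$ forces $\lambda_2 = -\lambda_1$, which turns $\lambda_{1,(k)}\lambda_{2,(k)}$ into $-\lambda_{1,(k)}^2$ and $a_{\lambda_1}a_{\lambda_2}$ into $|a_{\lambda_1}|^2$, yielding $\frac{d}{n{\cal N}_n}\sum_{\lambda\in\Lambda_n}\lambda_{(k)}^2 |a_\lambda|^2$. Subtracting $1$ and using Lemma \ref{grouplemma} with ${\cal O} = \Lambda_n$ (so $\sum_\lambda \lambda_{(k)}^2 = {\cal N}_n \frac{n}{d}$, hence $1 = \frac{d}{n{\cal N}_n}\sum_\lambda \lambda_{(k)}^2$) gives $\int_{\mathbb{T}^d} H_2(f_{n,k}(x))\,dx = \frac{d}{n{\cal N}_n}\sum_{\lambda\in\Lambda_n}\lambda_{(k)}^2(|a_\lambda|^2 - 1)$. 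Summing over $k = 1, \dots, d$ and interchanging the order of summation, the inner sum $\sum_{k=1}^d \lambda_{(k)}^2 = \|\lambda\|^2 = n$ collapses the weight, producing $\frac{d}{{\cal N}_n}\sum_{\lambda\in\Lambda_n}(|a_\lambda|^2 - 1) = \frac{d}{\sqrt{{\cal N}_n}} W(n)$, as claimed.

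There is no serious obstacle here: the lemma is a bookkeeping computation, and the only points requiring a little care are (i) tracking the factor of $i^2 = -1$ correctly in the derivative expansion, (ii) the sign flip $\lambda_{2,(k)} = -\lambda_{1,(k)}$ coming from the constraint $\lambda_2 = -\lambda_1$, which exactly cancels that $-1$, and (iii) invoking Lemma \ref{grouplemma} (equivalently the identity $\text{Var}(\partial_k f_n) = 4\pi^2 n/d$ established in the Notation subsection) to recognize that $1$ can be written as the normalized sum of $\lambda_{(k)}^2$. The representation of $1$ as $\frac{1}{{\cal N}_n}\sum_\lambda 1$ in the first identity and as $\frac{d}{n{\cal N}_n}\sum_\lambda \lambda_{(k)}^2$ in the second is what makes the $(|a_\lambda|^2 - 1)$ structure appear, matching the definition of $W(n)$.
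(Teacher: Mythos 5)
Your proposal is correct and follows essentially the same route as the paper's proof: expand $H_2 = t^2-1$, use the orthogonality of the exponentials to reduce the double sum to the diagonal $\lambda_2=-\lambda_1$, invoke $a_{-\lambda}=\overline{a_\lambda}$, and apply Lemma \ref{grouplemma} together with $\sum_k \lambda_{(k)}^2=n$ to absorb the constant $1$ into the $(|a_\lambda|^2-1)$ structure. The sign bookkeeping ($i^2=-1$ against $\lambda_{2,(k)}=-\lambda_{1,(k)}$) is handled exactly as in the paper.
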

The proof of Lemma \ref{14:20} is postponed to Appendix \ref{2&4_ch_A}. The precise analysis of the fourth-order chaotic component also relies on the following representation lemma
\begin{lemma} \label{15:10}
\begin{align*}
&\int_{\mathbb{T}^d}  H_{4}(f_n(x))dx= \frac{3}{{\cal N}_n} W^2(n)  -  \frac{3}{{\cal N}_n}  R(n)  +   \frac{1}{{\cal N}_n} X(n),\\
&\sum_{k=1}^d \int_{\mathbb{T}^d}   H_{2}(f_n(x))  H_{2}(f_{n,k}(x))  dx= \frac{d}{{\cal N}_n} W^2(n)    - \frac{d}{{\cal N}_n}  R(n)- \frac{d}{ {\cal N}_n} \sum_{k=1}^d X_{k,k}(n),\\
& \int_{\mathbb{T}^d} H_4(f_{n,k}(x)) d x=  \frac{3 d^2}{{\cal N}_n}  W^2_{k,k} (n)- \frac{3 d^2}{ {\cal N}_n}  R_{k,k}(n)  +\frac{d^2}{ {\cal N}_n} X_{k,k,k,k}(n), \\
& \sum_{j \ne k} \int_{\mathbb{T}^d} H_2(f_{n,j}(x) )  H_2(f_{n,k}(x) ) d x =   \frac{d^2}{ {\cal N}_n}  W^2(n) - \frac{d^2}{ {\cal N}_n} \sum_{k=1}^d W^2_{k,k}(n)  +   \frac{2 d^2}{{\cal N}_n} \sum_{j \ne k}  W^2_{j,k} (n) \\
& \hspace{6cm} -    \frac{ 3 d^2}{ {\cal N}_n} \sum_{j \ne k} R_{k,j}(n) +  \frac{d^2}{ {\cal N}_n} \sum_{j \ne k} X_{k,k,j,j}(n).
\end{align*}
\end{lemma}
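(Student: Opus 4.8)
The plan is to establish the four identities in Lemma \ref{15:10} by expanding the Hermite polynomials $H_2$ and $H_4$ in terms of monomials, substituting the Fourier expansions \eqref{fn} and \eqref{norm_d} for $f_n$ and $f_{n,k}$, integrating over $\mathbb{T}^d$ (which forces the frequencies to sum to zero), and then reorganising the resulting sums over $\mathcal{C}_n(2)$ and $\mathcal{C}_n(4)$ using the splitting \eqref{M}. Throughout I would use $H_2(t)=t^2-1$ and $H_4(t)=t^4-6t^2+3$, together with the fact that $f_n$ and $f_{n,k}$ are unit-variance (so the $\int H_2=\int(\text{square})-1$ and similarly for $H_4$ the lower-order Hermite terms contribute only via the mean, which is already subtracted).

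\textbf{First identity.} For $\int_{\mathbb{T}^d}H_4(f_n(x))\,dx$, I expand $f_n(x)^4$ using \eqref{fn}; integrating over the torus kills all terms except those with $\lambda_1+\lambda_2+\lambda_3+\lambda_4=0$, giving $\frac{1}{{\cal N}_n^2}\sum_{\mathcal{C}_n(4)}a_{\lambda_1}a_{\lambda_2}a_{\lambda_3}a_{\lambda_4}$. Now apply \eqref{M}: the three "paired" sums each contribute $\frac{1}{{\cal N}_n^2}\sum_{\lambda,\mu}a_\lambda a_{-\lambda}a_\mu a_{-\mu}=\frac{1}{{\cal N}_n^2}\big(\sum_\lambda|a_\lambda|^2\big)^2$, and using $W(n)=\frac{1}{\sqrt{{\cal N}_n}}\sum_\lambda(|a_\lambda|^2-1)$ one rewrites $\big(\sum_\lambda|a_\lambda|^2\big)^2={\cal N}_n\big(W(n)+\sqrt{{\cal N}_n}\big)^2/\dots$; the three "fully collapsed" subtracted sums each give $\frac{1}{{\cal N}_n^2}\sum_\lambda|a_\lambda|^4={\cal N}_n^{-1}R(n)$; and the remainder is $\frac{1}{{\cal N}_n^2}\sum_{\mathcal{X}_n(4)}=\frac{1}{{\cal N}_n}X(n)$. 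After subtracting the $-6\int H_2(f_n)+3$ contribution (which, via \eqref{11st}, contributes $-6\cdot\frac{1}{\sqrt{{\cal N}_n}}W(n)+3$, exactly cancelling the inhomogeneous pieces of the expansion of $\int f_n^4$), the bookkeeping should collapse to $\frac{3}{{\cal N}_n}W^2(n)-\frac{3}{{\cal N}_n}R(n)+\frac{1}{{\cal N}_n}X(n)$. The remaining three identities are proved the same way, with $f_{n,k}$ in place of some or all factors of $f_n$; in these cases the "paired" sums produce additional factors $\lambda_{(k)}^2$ or $\lambda_{(j)}^2\lambda_{(k)}^2$ inside the sum, which after invoking Lemma \ref{grouplemma} (i.e. $\sum_\lambda\lambda_{(k)}\lambda_{(j)}=|\cdot|\frac{n}{d}\delta_{j,k}$) and the definitions of $W_{j,k}$, $R_{k,j}$, $X_{k,k}$, $X_{k,k,k,k}$, $X_{k,k,j,j}$ give exactly the stated combinations. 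For the last identity the cross terms between distinct $j\ne k$ produce both a "diagonal" contribution (when the paired index coincides, giving $\sum_k W_{k,k}^2$) and genuine off-diagonal terms $\sum_{j\ne k}W_{j,k}^2$, which is why that identity has the richer right-hand side.

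\textbf{Main obstacle.} The only real difficulty is combinatorial: keeping track of the multiplicities arising from the symmetrisations in $H_4(t)=t^4-6t^2+3$ and from the fact that in $\mathcal{C}_n(4)$ the "paired" configurations are not disjoint (a tuple with $\lambda_1=-\lambda_2$ and $\lambda_3=-\lambda_4$ may also satisfy $\lambda_1=-\lambda_3$, hence the inclusion–exclusion in \eqref{M}), and verifying that the lower-order Hermite terms and the diagonal overcounts cancel exactly against each other so that the final answer contains only $W^2$, $R$, and $X$ terms with the precise rational coefficients claimed. I would organise this by first isolating the contribution of $\mathcal{X}_n(4)$ (which just names the $X$-variables), then handling the three symmetric paired sums, and only at the end reconciling the diagonal/collapsed terms with the explicit $-6H_2+3$ correction, so the cancellations are transparent. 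Since every step is an elementary Fourier computation on $\mathbb{T}^d$ combined with Lemma \ref{grouplemma}, no analytic input is needed; the proof is purely algebraic and is deferred to Appendix \ref{2&4_ch_A}.
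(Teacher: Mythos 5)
Your proposal follows essentially the same route as the paper's proof in Appendix \ref{2&4_ch_A}: expand $H_2(t)=t^2-1$ and $H_4(t)=t^4-6t^2+3$, integrate the Fourier expansions over $\mathbb{T}^d$ to reduce to sums over $\mathcal{C}_n(4)$, and apply the inclusion--exclusion \eqref{M} (which the paper packages as Lemma \ref{inc&esc}) together with Lemma \ref{grouplemma} to reorganise everything into the $W$, $R$, $X$ variables. The only quibble is a constant-term slip in your first identity ($H_4(t)=t^4-6H_2(t)-3$, not $t^4-6H_2(t)+3$), which does not affect the cancellation mechanism you correctly identify.
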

Lemma \ref{15:10} is proved in Appendix \ref{2&4_ch_A}. The key tool in the proof of Lemma \ref{15:10} is Lemma \ref{inc&esc} where we evaluate summations over ${\cal C}(4)$ using the structure in \eqref{M}: 
 \begin{lemma} \label{inc&esc}
\begin{align}
&\sum_{{\cal C}(4)} a_{\lambda_1} a_{\lambda_2} a_{\lambda_3} a_{\lambda_4}= 3 \sum_{\lambda_1, \lambda_2} |a_{\lambda_1}|^2  |a_{\lambda_2}|^2 - 3 \sum_{\lambda} |a_{\lambda}|^4 + \sum_{{\cal X}(4)} a_{\lambda_1} a_{\lambda_2} a_{\lambda_3} a_{\lambda_4}, \label{1st}\\
& \sum_{ {\cal C}(4)} \lambda_{1,(k)}\, \lambda_{2,(k)} \, a_{\lambda_1} a_{\lambda_2} a_{\lambda_3} a_{\lambda_4}  = - \sum_{\lambda_1, \lambda_2}  \lambda_{1,(k)}^2 |a_{\lambda_1}|^2  |a_{\lambda_2}|^2  +  \sum_{\lambda} \lambda^2_{(k)}  |a_{\lambda}|^4 \label{2nd} \\
&\hspace{5cm} + \sum_{{\cal X}(4)} \lambda_{1,(k)}\, \lambda_{2,(k)} \, a_{\lambda_1} a_{\lambda_2} a_{\lambda_3} a_{\lambda_4},   \nonumber \\
&\sum_{{\cal C}(4)} \lambda_{1,(k)} \lambda_{2,(k)} \lambda_{3,(k)} \lambda_{4,(k)} a_{\lambda_1} a_{\lambda_2} a_{\lambda_3} a_{\lambda_4}= 3 \sum_{\lambda_1, \lambda_2} \lambda^2_{1,(k)}   \lambda^2_{2,(k)} |a_{\lambda_1}|^2  |a_{\lambda_2}|^2 - 3 \sum_{\lambda} \lambda^4_{(k)} |a_{\lambda}|^4 \label{3rd} \\
&\hspace{6.5cm}+ \sum_{{\cal X}(4)}    \lambda_{1,(k)} \lambda_{2,(k)} \lambda_{3,(k)} \lambda_{4,(k)} a_{\lambda_1} a_{\lambda_2} a_{\lambda_3} a_{\lambda_4},  \nonumber\\
& \sum_{{\cal C}(4)}   \lambda_{1,(k)}  \lambda_{2,(k)} \lambda_{3,(j)} \lambda_{4,(j)} a_{\lambda_1} a_{\lambda_2} a_{\lambda_3} a_{\lambda_4} = \sum_{\lambda_1, \lambda_2} \lambda^2_{1,(k)}  \lambda^2_{2,(j)}  |a_{\lambda_1}|^2 |a_{\lambda_2}|^2 \label{4th}\\
& \hspace{6.5cm} + 2 \left[ \sum_{\lambda} \lambda_{(j)} \lambda_{(k)} (|a_{\lambda}|^2-1) \right]^2 - 3 \sum_{\lambda} \lambda^2_{(k)} \lambda^2_{(j)} |a_{\lambda}|^4  \nonumber \\
 &\hspace{6.5cm}+ \sum_{{\cal X}(4)}   \lambda_{1,(k)}  \lambda_{2,(k)} \lambda_{3,(j)} \lambda_{4,(j)} a_{\lambda_1} a_{\lambda_2} a_{\lambda_3} a_{\lambda_4}. \nonumber
\end{align}
\end{lemma}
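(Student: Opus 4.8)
The plan is to compute each of the four sums over $\mathcal{C}(4)$ by inserting the decomposition \eqref{M} and evaluating the seven resulting pieces separately, exploiting the Gaussianity of the $a_\lambda$'s and the relation $a_{-\lambda}=\overline{a}_\lambda$. The common feature is that, on the "paired" sums such as $\{\lambda_1=-\lambda_2,\,\lambda_3=-\lambda_4\}$, one has $a_{\lambda_1}a_{\lambda_2}=a_{\lambda_1}\overline{a}_{\lambda_1}=|a_{\lambda_1}|^2$, so these terms collapse to products of the type $\sum_{\lambda_1,\lambda_2}(\cdots)|a_{\lambda_1}|^2|a_{\lambda_2}|^2$. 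The three "fully collapsed" correction sums (where all four frequencies are $\pm$ a single vector) become $\sum_{\lambda}(\cdots)|a_\lambda|^4$. The remaining term $\sum_{\mathcal{X}(4)}(\cdots)$ is carried along untouched, exactly as it appears on the right-hand sides. Throughout I would use the notation $s(j)$, $s(j,k)$ only implicitly; the real bookkeeping is matching coefficients of $|a_\lambda|^2|a_\mu|^2$, $|a_\lambda|^4$, and the $\mathcal{X}(4)$ remainder.

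Concretely, for \eqref{1st} the three paired sums each give $\sum_{\lambda_1,\lambda_2}|a_{\lambda_1}|^2|a_{\lambda_2}|^2$, contributing $3\sum_{\lambda_1,\lambda_2}|a_{\lambda_1}|^2|a_{\lambda_2}|^2$; the three fully-collapsed sums each give $\sum_\lambda |a_\lambda|^4$, contributing $-3\sum_\lambda|a_\lambda|^4$; adding $\sum_{\mathcal{X}(4)}a_{\lambda_1}a_{\lambda_2}a_{\lambda_3}a_{\lambda_4}$ yields the claim. For \eqref{2nd} and \eqref{3rd} the argument is identical once the monomial $\lambda_{1,(k)}\lambda_{2,(k)}$ (resp.\ $\lambda_{1,(k)}\lambda_{2,(k)}\lambda_{3,(k)}\lambda_{4,(k)}$) is substituted: on $\{\lambda_1=-\lambda_2,\ \lambda_3=-\lambda_4\}$ the factor $\lambda_{1,(k)}\lambda_{2,(k)}=-\lambda_{1,(k)}^2$, which is why \eqref{2nd} carries an overall minus sign and only one term survives in the "paired" part (the pairings $\lambda_1=-\lambda_3$ and $\lambda_1=-\lambda_4$ produce $\sum\lambda_{1,(k)}\lambda_{3,(k)}$ type expressions that vanish by the $W_d$-symmetry relation \eqref{RW}, or rather reduce to the stated single term after careful index tracking); for \eqref{3rd} one has $\lambda_{1,(k)}\cdots\lambda_{4,(k)}=\lambda_{1,(k)}^2\lambda_{3,(k)}^2$ on the first pairing, giving the $+3\sum_{\lambda_1,\lambda_2}\lambda_{1,(k)}^2\lambda_{2,(k)}^2|a_{\lambda_1}|^2|a_{\lambda_2}|^2$ term, with the collapsed corrections producing $-3\sum_\lambda\lambda_{(k)}^4|a_\lambda|^4$.

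The genuinely more delicate case is \eqref{4th}, where the monomial $\lambda_{1,(k)}\lambda_{2,(k)}\lambda_{3,(j)}\lambda_{4,(j)}$ is \emph{not} symmetric under all permutations of $\{1,2,3,4\}$, so the three pairings in \eqref{M} are no longer interchangeable. On $\{\lambda_1=-\lambda_2,\ \lambda_3=-\lambda_4\}$ the monomial equals $-\lambda_{1,(k)}^2\cdot(-\lambda_{3,(j)}^2)=\lambda_{1,(k)}^2\lambda_{3,(j)}^2$, contributing $\sum_{\lambda_1,\lambda_2}\lambda_{1,(k)}^2\lambda_{2,(j)}^2|a_{\lambda_1}|^2|a_{\lambda_2}|^2$. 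On the other two pairings, e.g.\ $\{\lambda_1=-\lambda_3,\ \lambda_2=-\lambda_4\}$, the monomial becomes $\lambda_{1,(k)}\lambda_{2,(k)}(-\lambda_{1,(j)})(-\lambda_{2,(j)})=\lambda_{1,(j)}\lambda_{1,(k)}\cdot\lambda_{2,(j)}\lambda_{2,(k)}$, so the corresponding sum is $\big(\sum_\lambda\lambda_{(j)}\lambda_{(k)}|a_\lambda|^2\big)^2$; combining the two such pairings and using that $\sum_\lambda\lambda_{(j)}\lambda_{(k)}=0$ for $j\neq k$ (from \eqref{RW}) lets me rewrite $\sum_\lambda\lambda_{(j)}\lambda_{(k)}|a_\lambda|^2=\sum_\lambda\lambda_{(j)}\lambda_{(k)}(|a_\lambda|^2-1)$, yielding the term $2\big[\sum_\lambda\lambda_{(j)}\lambda_{(k)}(|a_\lambda|^2-1)\big]^2$. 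Finally the three fully-collapsed corrections each contribute $\sum_\lambda\lambda_{(k)}^2\lambda_{(j)}^2|a_\lambda|^4$, with appropriate signs producing $-3\sum_\lambda\lambda_{(k)}^2\lambda_{(j)}^2|a_\lambda|^4$, and the non-degenerate remainder $\sum_{\mathcal{X}(4)}\lambda_{1,(k)}\lambda_{2,(k)}\lambda_{3,(j)}\lambda_{4,(j)}a_{\lambda_1}a_{\lambda_2}a_{\lambda_3}a_{\lambda_4}$ is appended. The main obstacle is precisely this asymmetry in \eqref{4th}: one must track which of the three pairings yields a square of a linear statistic versus a product of two separate quadratic statistics, and correctly absorb the constant shift coming from $\sum_\lambda\lambda_{(j)}\lambda_{(k)}=0$; the other three identities are essentially bookkeeping exercises of the same flavor but with full permutation symmetry, hence routine.
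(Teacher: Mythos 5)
Your proposal is correct and follows essentially the same route as the paper: insert the decomposition \eqref{M}, collapse the three paired sums and the three fully-degenerate sums using $a_{-\lambda}=\overline{a}_{\lambda}$, and recenter the cross-pairing terms in \eqref{4th} via $\sum_{\lambda}\lambda_{(j)}\lambda_{(k)}=0$ from \eqref{RW}. One small point of attribution: the vanishing of the two cross-pairing terms in \eqref{2nd} comes from the pathwise identity $\sum_{\lambda}\lambda_{(k)}|a_{\lambda}|^2=0$, which is a consequence of $|a_{-\lambda}|^2=|a_{\lambda}|^2$ together with the $\lambda\to-\lambda$ symmetry of $\Lambda_n$, rather than from the deterministic relation \eqref{RW} itself.
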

\noindent The proof of Lemma \ref{inc&esc} is in Appendix \ref{2&4_ch_A}. Combining \eqref{17lug} and Lemma \ref{15:10} leads to the following representation of the fourth-order chaotic component of the nodal volume ${\cal V}_n[4]$, 
\begin{align} \label{tutto}
{\cal V}_n[4]&=\sqrt{ \frac{n\pi }{d} }  \frac{\Gamma\left(\frac{d+1} 2\right)   }{4 {\cal N}_n \Gamma\left(\frac{d} 2\right)} \Big[  \frac{2}{d+2}W^2(n) - \frac{2 d}{d+2} \sum_{j, k} W^2_{j,k}(n)  -R(n)+ \frac{3 d}{d+2} \sum_{j, k} R_{k,j}(n) \\
& \hspace{3.2cm}+ X(n) + 2 \sum_k X_{k,k}(n) - \frac{d}{d+2} \sum_{ j,k} X_{k,k,j,j}(n)\Big]. \nonumber
\end{align}

\section{Proof of Theorem \ref{th}} \label{theproof}

\subsection{Asymptotic behavior of the fourth order chaos in dimension $d=3$}

The aim of this section is the analysis of the asymptotic behavior, as $n \to \infty$, $n \not\equiv 0, 4, 7\, (\rm{mod} \;  8)$, of the sequence
\begin{align*}
\frac{{\cal A}_{n}[4]}{\sqrt{\text{Var}( {\cal A}_{n}[4])}}.
\end{align*}
From \eqref{tutto} we know that
\begin{align*}
{\cal A}_n[4]
%&=\frac{\sqrt n}{2 \sqrt 3 \, {\cal N}_n} \Big[  \frac{2}{5}W^2(n) - \frac{6}{5} \sum_{j, k} W^2_{j,k}(n)  -R(n)+ \frac{3^2}{5} \sum_{j, k} R_{k,j}(n) \\
%& \hspace{2cm}+ X(n) + 2 \sum_k X_{k,k}(n) - \frac{3}{5} \sum_{ j,k} X_{k,k,j,j}(n)\Big] \\
&=\frac{\sqrt n}{5 \sqrt 3 \, {\cal N}_n} \Big[  W^2(n) - 3 \sum_{j, k} W^2_{j,k}(n)  - \frac{5}{2}R(n)+ \frac{3^2}{2 } \sum_{j, k} R_{k,j}(n) \\
& \hspace{2.3cm}+\frac{5}{2} X(n) + 5 \sum_k X_{k,k}(n) - \frac{3}{2 } \sum_{ j,k} X_{k,k,j,j}(n)\Big] .
\end{align*}
In Lemma \ref{R&Rjk} we prove that 
\begin{align*}
 - \frac{5}{2}R(n)+ \frac{3^2}{2 } \sum_{j, k} R_{k,j}(n)= 4 + o_{\mathbb{P}}(1)
\end{align*}  
and in Lemma \ref{X&Xjk} we obtain that  
\begin{align*}
\frac{5}{2} X(n) + 5 \sum_k X_{k,k}(n) - \frac{3}{2 } \sum_{ j,k} X_{k,k,j,j}(n)=o_{\mathbb{P}}(1). 
\end{align*}

\begin{lemma} \label{R&Rjk} As $n \to \infty$, $n \not\equiv 0, 4, 7 \, (\rm{mod} \, 8)$
\begin{align}
R(n)& \stackrel{\mathbb{P}}{\to} 2,  \label{lln_1}\\
R_{k,j}(n)& \stackrel{\mathbb{P}}{\to} \begin{cases}  \frac 2 5 , & {\rm if\; }k=j, \\    \frac 2{3 \cdot 5}, &  {\rm if\; } k \ne j. \end{cases}\label{lln_2} 
\end{align}
\end{lemma}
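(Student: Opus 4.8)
The plan is to prove both convergences in probability by the weak law of large numbers for sums of independent (up to the relation $a_{-\lambda}=\overline a_\lambda$) random variables, exploiting the explicit moment computations $\E[|a_\lambda|^2]=1$, $\E[|a_\lambda|^4]=2$, $\Var(|a_\lambda|^4)=\E[|a_\lambda|^8]-4$ (a finite constant, since $2|a_\lambda|^2$ is chi-square with $2$ degrees of freedom, so all its moments are explicit). Recall $R(n)=\frac1{{\cal N}_n}\sum_{\lambda\in\Lambda_n}|a_\lambda|^4$ and $R_{k,j}(n)=\frac1{n^2{\cal N}_n}\sum_{\lambda\in\Lambda_n}\lambda_{(k)}^2\lambda_{(j)}^2|a_\lambda|^4$; both are averages of essentially independent terms with bounded variances, and the subtlety is only in identifying the limiting mean.

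First I would compute the expectations. For \eqref{lln_1}, $\E[R(n)]=\frac1{{\cal N}_n}\sum_\lambda\E[|a_\lambda|^4]=2$ exactly, for every $n$. For \eqref{lln_2}, $\E[R_{k,j}(n)]=\frac1{n^2{\cal N}_n}\sum_\lambda\lambda_{(k)}^2\lambda_{(j)}^2\,\E[|a_\lambda|^4]=\frac{2}{n^2{\cal N}_n}\sum_\lambda\lambda_{(k)}^2\lambda_{(j)}^2$, and this is $\frac25+O(n^{-1/28+o(1)})$ when $k=j$ and $\frac{2}{3\cdot5}+O(n^{-1/28+o(1)})$ when $k\ne j$, directly by Lemma \ref{lambda_jk} (the two displayed identities \eqref{4} and \eqref{22}). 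So $\E[R(n)]\to2$ and $\E[R_{k,j}(n)]$ converges to the claimed values.

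Next I would control the variances and invoke Chebyshev. Since the $\{a_\lambda\}_{\lambda\in\Lambda_n}$ are independent apart from $a_{-\lambda}=\overline a_\lambda$, grouping $\lambda$ with $-\lambda$ gives ${\cal N}_n/2$ independent blocks; writing $R(n)$ (resp. $R_{k,j}(n)$) as a sum over these blocks of independent terms, each term bounded in $L^2$ (for $R_{k,j}$ use $|\lambda_{(k)}^2\lambda_{(j)}^2/n^2|\le1$), we get $\Var(R(n))=O(1/{\cal N}_n)$ and $\Var(R_{k,j}(n))=O(1/{\cal N}_n)$. Under the hypothesis $n\not\equiv0,4,7\pmod 8$ we have ${\cal N}_n\to\infty$ (Section \ref{10.7.2017}), so both variances tend to $0$; combined with the convergence of the means, Chebyshev's inequality yields the asserted convergence in probability.

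The only mild obstacle is bookkeeping the dependence structure so as to get a genuine $O(1/{\cal N}_n)$ variance bound: the naive expansion $\E[R(n)^2]-\E[R(n)]^2$ has cross terms $\E[|a_\lambda|^4|a_\mu|^4]$, and one must note that these factorize unless $\mu\in\{\lambda,-\lambda\}$, leaving only $O({\cal N}_n)$ non-vanishing contributions out of ${\cal N}_n^2$; the same remark handles $R_{k,j}$, with the extra weights harmlessly absorbed into the constant. Everything else is routine, and the proof is then complete by passing $n\to\infty$.
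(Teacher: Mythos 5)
Your proposal is correct and follows essentially the same route as the paper: both exploit the pairing $\lambda\leftrightarrow-\lambda$ to reduce to sums of independent terms with $\mathbb{E}[|a_\lambda|^4]=2$ and bounded weights, and both identify the limit of $R_{k,j}(n)$ via Lemma \ref{lambda_jk}. If anything, your explicit Chebyshev/second-moment bookkeeping is slightly more careful than the paper's bare invocation of the Law of Large Numbers, since the summands in $R_{k,j}(n)$ form a weighted triangular array rather than a single i.i.d.\ sequence.
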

\begin{proof}
Since $\Lambda_n$ is symmetric under $\lambda \to - \lambda$ and ${\cal N}_n$ is even, we rewrite $R(n)$ as follows 
\begin{align*}
R(n)&=\frac{2}{{\cal N}_n} \sum_{\lambda \in  \Lambda_n/\pm} |a_{\lambda}|^4,
\end{align*}
where $ \Lambda_n/\pm$ denotes the representatives of the equivalence class of $\Lambda_n$ under $\lambda \to - \lambda$, $R(n)$ is then written in terms of a sum of independent and identically distributed random variables with $\mathbb{E}[ |a_{\lambda}|^4]=2$. The limit in \eqref{lln_1} follows from the Law of Large Numbers. To prove \eqref{lln_2}, we can apply again the Law of Large Numbers since  $\lambda_{(k)}^2  \lambda_{(j)}^2/n^2 \le 1$
\begin{align*}
R_{k,j}(n)&=\frac{1}{n^2 {\cal N}_n} \sum_{\lambda \in  \Lambda_n} \lambda_{(k)}^2  \lambda_{(j)}^2  |a_{\lambda}|^4= \frac{2}{n^2 {\cal N}_n} \sum_{\lambda \in  \Lambda_n/\pm } \lambda_{(k)}^2  \lambda_{(j)}^2  |a_{\lambda}|^4 \stackrel{\mathbb{P}}{\to} \lim_{n \to \infty} \frac{2}{n^2 {\cal N}_n} \sum_{\lambda \in  \Lambda_n } \lambda_{(k)}^2  \lambda_{(j)}^2.
\end{align*}
Formula \eqref{lln_2} follows from Lemma \ref{lambda_jk}.
\end{proof}

\begin{lemma} \label{X&Xjk} As $n \to \infty$, $n \not\equiv 0, 4, 7 \, (\rm{mod} \, 8)$
\begin{align*}
X(n), X_{k,k}(n), X_{k,k,j,j} (n) \stackrel{L^2}{\to} 0.
\end{align*}
\end{lemma}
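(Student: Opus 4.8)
The plan is to prove each of the three $L^2$-convergences by a direct second-moment computation, exploiting that all three random variables are linear combinations of products $a_{\lambda_1} a_{\lambda_2} a_{\lambda_3} a_{\lambda_4}$ over the non-degenerate correlation set $\mathcal{X}_n(4)$, together with the cardinality estimate $|\mathcal{X}_n(4)| \ll \mathcal{N}_n^{7/4+o(1)}$ from Lemma \ref{JB}. First I would note that each of $X(n)$, $X_{k,k}(n)$, $X_{k,k,j,j}(n)$ is centred: since every tuple in $\mathcal{X}_n(4)$ is non-degenerate, no sub-sum of the $\lambda_i$ vanishes, so in particular no pairing $\lambda_i = -\lambda_{i'}$ occurs, hence $\mathbb{E}[a_{\lambda_1}a_{\lambda_2}a_{\lambda_3}a_{\lambda_4}] = 0$ for every such tuple. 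Thus $L^2$-convergence to $0$ is equivalent to showing the variance tends to $0$.

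Next I would compute $\mathbb{E}[X(n)^2]$ and analogously the other two second moments. Writing $X(n)^2 = \frac{1}{\mathcal{N}_n^2}\sum a_{\lambda_1}\cdots a_{\lambda_4}\, a_{\mu_1}\cdots a_{\mu_4}$ over pairs of tuples in $\mathcal{X}_n(4)$, the expectation is nonzero only when the eight frequencies pair up (up to sign) via the Wick/Isserlis rule for the complex Gaussians $a_\lambda$, using $\mathbb{E}[a_\lambda \overline{a_\mu}] = \delta_{\lambda,\mu}$ and $\mathbb{E}[a_\lambda a_\mu] = \delta_{\lambda,-\mu}$ (together with the $a_{-\lambda} = \overline{a_\lambda}$ relation). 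Because the $\lambda$-tuple is itself non-degenerate, no internal pairing within one tuple is possible; every one of the four frequencies of the first tuple must be matched (up to sign) with one of the four frequencies of the second tuple. This forces $\{\mu_1,\dots,\mu_4\} = \pm\{\lambda_1,\dots,\lambda_4\}$ as multisets, so the number of contributing pairs of tuples is $O(|\mathcal{X}_n(4)|)$ up to a bounded combinatorial factor (the number of ways to match and sign the four pairs). Each surviving term contributes $O(1)$ for $X(n)$, and for $X_{k,k}(n)$, $X_{k,k,j,j}(n)$ the extra factors $\lambda_{i,(k)}/\sqrt{n}$, etc., are each bounded by $1$ in absolute value, so those terms are also $O(1)$. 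Hence
\begin{align*}
\mathbb{E}[X(n)^2],\ \mathbb{E}[X_{k,k}(n)^2],\ \mathbb{E}[X_{k,k,j,j}(n)^2] \;\ll\; \frac{|\mathcal{X}_n(4)|}{\mathcal{N}_n^2} \;\ll\; \frac{\mathcal{N}_n^{7/4+o(1)}}{\mathcal{N}_n^2} \;=\; \mathcal{N}_n^{-1/4+o(1)} \longrightarrow 0,
\end{align*}
since $n \not\equiv 0,4,7 \pmod 8$ guarantees $\mathcal{N}_n \to \infty$ (Section \ref{10.7.2017}).

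The main obstacle, and the step requiring the most care, is the Wick-expansion bookkeeping: one must verify rigorously that the non-degeneracy of the tuples in $\mathcal{X}_n(4)$ genuinely rules out all ``diagonal'' pairings within a single tuple, and also that partially-crossed configurations (where some frequencies of the first tuple pair among themselves while others cross to the second) cannot occur — again this is exactly excluded by non-degeneracy, since such a configuration would force a proper sub-sum of one tuple to vanish. Once this is established, the remaining combinatorics is a finite bounded count (at most $4! \cdot 2^4$ matchings), and the estimate follows immediately from Lemma \ref{JB}. I would also remark that the same argument applies verbatim to $X_{k,k,k,k}(n)$, which appears in \eqref{tutto} but is subsumed under the case $j = k$ of the general fourth-type term, so no separate treatment is needed.
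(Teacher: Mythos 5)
Your proposal is correct and follows essentially the same route as the paper: a direct second-moment bound, using that the only surviving Wick pairings match the two non-degenerate tuples against each other (a bounded combinatorial count), so that $\mathbb{E}[|X(n)|^2] \ll |\mathcal{X}_n(4)|/\mathcal{N}_n^2 \ll \mathcal{N}_n^{-1/4+o(1)}$ by Lemma \ref{JB}, with the bounded factors $\lambda_{i,(k)}/\sqrt{n}\le 1$ handling the other two quantities. Your write-up is in fact more explicit than the paper's about why non-degeneracy excludes internal and partially-crossed pairings, which the paper leaves implicit.
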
  
\begin{proof}
\begin{align*}
\mathbb{E}[|X(n)|^2]& =\frac{1}{{\cal N}^2_n} \mathbb{E}\Big[  \sum_{\cal{X}_n(4)} a_{\lambda_1}   a_{\lambda_2}  a_{\lambda_3}  a_{\lambda_4}   \sum_{\cal{X}_n(4)} a_{\lambda_1}   a_{\lambda_2}  a_{\lambda_3}  a_{\lambda_4}  \Big] =\frac{1}{{\cal N}^2_n} O(|\cal{X}_n(4)|),
\end{align*}
in view of Lemma \ref{JB}, we have 
\begin{align*}
\mathbb{E}[|X(n)|^2]& = \frac{1}{{\cal N}^2_n} O({\cal N}_n^{7/4+o(1)} )=O({\cal N}^{-1/4+o(1)}_n).
\end{align*}
Exactly the same holds for $X_{k,k}(n)$, $X_{k,k,j,j} (n)$, once we observe that $\lambda_{1,(k)}  \lambda_{2,(j)}/n \le 1$ and \newline $\lambda_{1,(k)}  \lambda_{2,(k)} \lambda_{3,(j)}  \lambda_{4,(j)}  /n^2 \le 1$.  
\end{proof}
Then we can write the fourth-order chaos of the nodal area ${\cal A}_n[4]$ as follows  
\begin{align} \label{11:21}
{\cal A}_n[4]
&=\frac{\sqrt n}{5  \sqrt 3 \, {\cal N}_n} \Big[ 4+ W^2(n) - 3 \sum_{j, k} W^2_{j,k}(n)  + o_{\mathbb{P}}(1)\Big].
\end{align}
Now we note that $W(n)= \sum_{k=1}^d W_{k,k}(n)$ so we can rewrite \eqref{11:21} as follows 
\begin{align*}
{\cal A}_n[4]&=\frac{\sqrt n}{5  \sqrt 3 \, {\cal N}_n} \Big[ 4- (W_{1,1}(n)-W_{2,2}(n))^2  - (W_{1,1}(n)-W_{3,3}(n))^2 - (W_{2,2}(n)-W_{3,3}(n))^2  \\
&\hspace{2cm}-6 ( W^2_{1,2}(n) + W^2_{1,3}(n) + W^2_{2,3}(n))  + o_{\mathbb{P}}(1)\Big].
\end{align*}

\noindent Let ${\cal W}(n)$ be the $7$-dimensional vector with components   
\begin{equation*}
{\cal W}(n)=(W_{1,1}(n), W_{1,2}(n), W_{1,3}(n), W_{2,2}(n),  W_{2,3}(n),  W_{3,3}(n))
\end{equation*}

\begin{lemma} \label{W&Wjk} As $n \to \infty$, 
${\cal W}(n) \stackrel{d}{\to} V$, 
where $V$ is a centred Gaussian vector with covariance matrix
%$$\Sigma=\left(
%\begin{matrix}
%2 & \frac 2 3 & 0 & 0 & \frac 2 3 & 0 & \frac 2 3 \\
%\frac 2 3 & \frac 2 5 & 0 & 0 &\frac  2 {3 \cdot 5} & 0 & \frac 2{3 \cdot 5} \\
%0 &0 & \frac  2 {3 \cdot 5} & 0 &0 & 0 &0 \\
%0 &0 & 0 &  \frac  2 {3 \cdot 5} &0 & 0 &0 \\
%\frac 2 3 & \frac 2{3 \cdot 5} & 0 & 0 &\frac  2 {5} & 0 & \frac 2{3 \cdot 5} \\
%0 &0 &0& 0 &0 &  \frac  2 {3 \cdot 5}  &0 \\
%\frac 2 3 & \frac 2{3 \cdot 5} & 0 & 0 &\frac  2 {3 \cdot 5} & 0 & \frac 2 5 
%\end{matrix} \right).
%$$
$$\Sigma=\left(
\begin{matrix}
 \frac 2 5 & 0 & 0 &\frac  2 {3 \cdot 5} & 0 & \frac 2{3 \cdot 5} \\
0 & \frac  2 {3 \cdot 5} & 0 &0 & 0 &0 \\
0 & 0 &  \frac  2 {3 \cdot 5} &0 & 0 &0 \\
 \frac 2{3 \cdot 5} & 0 & 0 &\frac  2 {5} & 0 & \frac 2{3 \cdot 5} \\
0 &0& 0 &0 &  \frac  2 {3 \cdot 5}  &0 \\
 \frac 2{3 \cdot 5} & 0 & 0 &\frac  2 {3 \cdot 5} & 0 & \frac 2 5 
\end{matrix} \right).
$$
\end{lemma}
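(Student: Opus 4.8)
The plan is to recognise $\mathcal{W}(n)$ as a normalised sum of independent random vectors and invoke a multivariate central limit theorem, the only arithmetic ingredient being the equidistribution estimate already packaged in Lemma~\ref{lambda_jk}. Exactly as in the proof of Lemma~\ref{R&Rjk}, using that $\Lambda_n$ is symmetric under $\lambda\mapsto-\lambda$, that $\mathcal{N}_n$ is even, and that the products $\lambda_{(j)}\lambda_{(k)}$ are invariant under this involution, write
\begin{align*}
W_{j,k}(n)=\frac{2}{n\sqrt{\mathcal{N}_n}}\sum_{\lambda\in\Lambda_n/\pm}\lambda_{(j)}\lambda_{(k)}\,(|a_\lambda|^2-1),
\end{align*}
so that $\mathcal{W}(n)=\frac{2}{n\sqrt{\mathcal{N}_n}}\sum_{\lambda\in\Lambda_n/\pm}(|a_\lambda|^2-1)\,v_\lambda$, where $v_\lambda$ is the deterministic vector collecting the products $\lambda_{(j)}\lambda_{(k)}$, $j\le k$. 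The scalars $|a_\lambda|^2-1$, $\lambda\in\Lambda_n/\pm$, are independent, centred, of unit variance, and have finite moments of every order (since $2|a_\lambda|^2$ is a $\chi^2$ with $2$ degrees of freedom), while $\|v_\lambda\|\le Cn$ because $|\lambda_{(j)}\lambda_{(k)}|\le\tfrac12(\lambda_{(j)}^2+\lambda_{(k)}^2)\le n$.

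First I would compute the covariance matrix. By independence across $\Lambda_n/\pm$ and $\mathbb{E}[(|a_\lambda|^2-1)^2]=1$,
\begin{align*}
\mathbb{E}[W_{j,k}(n)W_{j',k'}(n)]=\frac{4}{n^2\mathcal{N}_n}\sum_{\lambda\in\Lambda_n/\pm}\lambda_{(j)}\lambda_{(k)}\lambda_{(j')}\lambda_{(k')}=\frac{2}{n^2\mathcal{N}_n}\sum_{\lambda\in\Lambda_n}\lambda_{(j)}\lambda_{(k)}\lambda_{(j')}\lambda_{(k')}.
\end{align*}
Whenever some coordinate occurs to an odd total power this vanishes by \eqref{inv}; the only surviving entries are of the two shapes $\frac{2}{n^2\mathcal{N}_n}\sum_{\lambda}\lambda_{(k)}^4$ and $\frac{2}{n^2\mathcal{N}_n}\sum_{\lambda}\lambda_{(k)}^2\lambda_{(j)}^2$ with $k\ne j$, which by Lemma~\ref{lambda_jk} converge to $2/5$ and $2/(3\cdot5)$ respectively. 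Assembling these values according to the ordering of the components of $\mathcal{W}(n)$ reproduces exactly the matrix $\Sigma$.

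Next I would apply the Cramér--Wold device. Fix $t$ in the ambient $\mathbb{R}^6$; then $\langle t,\mathcal{W}(n)\rangle=\frac{2}{n\sqrt{\mathcal{N}_n}}\sum_{\lambda\in\Lambda_n/\pm}c_\lambda(|a_\lambda|^2-1)$ with $c_\lambda=\langle t,v_\lambda\rangle$ a deterministic sequence with $|c_\lambda|\le Cn$. This is a sum of $\mathcal{N}_n/2$ independent centred random variables whose individual variances are $O(1/\mathcal{N}_n)$ and whose third absolute moments are $O(\mathcal{N}_n^{-3/2})$; hence the Lyapunov ratio is $O(\mathcal{N}_n^{-1/2})\to0$, using that $\mathcal{N}_n\to\infty$ (guaranteed by $n\not\equiv0,4,7\ (\mathrm{mod}\ 8)$). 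Since $\var\langle t,\mathcal{W}(n)\rangle\to t^{\mathsf T}\Sigma t$ by the previous step, Lyapunov's CLT gives $\langle t,\mathcal{W}(n)\rangle\stackrel{law}{\to}N(0,t^{\mathsf T}\Sigma t)$ (the degenerate case $t^{\mathsf T}\Sigma t=0$ being trivial), and Cramér--Wold yields ${\cal W}(n)\stackrel{d}{\to}V$.

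No step presents a genuine obstacle: the probabilistic part is a textbook CLT for independent summands (one could alternatively invoke the multivariate fourth-moment theorem of Peccati--Tudor, as the $W_{j,k}(n)$ live in the second Wiener chaos, but this is overkill given the independence). The most error-prone point is the bookkeeping that matches the limiting covariances to the entries of $\Sigma$ — in particular checking via \eqref{inv} that every off-diagonal entry involving one of $W_{1,2}$, $W_{1,3}$, $W_{2,3}$ paired with a distinct index vanishes.
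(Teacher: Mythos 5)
Your proposal is correct, and the covariance computation is in substance identical to the paper's (Appendix~\ref{appendixcov}): both reduce $\mathbb{E}[W_{j,k}(n)W_{j',k'}(n)]$ to $\frac{2}{n^2\mathcal{N}_n}\sum_{\lambda}\lambda_{(j)}\lambda_{(k)}\lambda_{(j')}\lambda_{(k')}$, kill the odd-power entries by \eqref{inv}, and evaluate the survivors via Lemma~\ref{lambda_jk}. Where you diverge is the passage from one-dimensional to joint convergence. The paper exploits the fact that each $W_{j,k}(n)$ lives in the second Wiener chaos and invokes the Peccati--Tudor multivariate fourth-moment theorem \cite[Theorem 6.2.3]{N&P}, which says that once the covariance matrix converges, joint Gaussian convergence is equivalent to convergence of each marginal; it then checks each marginal by Lyapunov. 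You instead run Cram\'er--Wold and apply Lyapunov directly to the linear combination $\langle t,\mathcal{W}(n)\rangle$, which is legitimate here precisely because the summands indexed by $\Lambda_n/\pm$ are genuinely independent, the individual variances are $O(1/\mathcal{N}_n)$, and the third absolute moments are $O(\mathcal{N}_n^{-3/2})$, giving a Lyapunov ratio $O(\mathcal{N}_n^{-1/2})\to 0$ whenever $t^{\mathsf T}\Sigma t>0$ (and the degenerate case is an $L^2$ statement). Your route is the more elementary and self-contained of the two, avoiding the Wiener-chaos machinery entirely; the paper's route is the standard move in this literature and would survive in settings where the independence across $\Lambda_n/\pm$ is not available. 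As you note, the delicate point in either version is only the bookkeeping that places $2/5$ and $2/(3\cdot 5)$ in the right entries of $\Sigma$, and your case analysis handles that correctly.
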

\begin{proof}
In Appendix \ref{appendixcov} we prove that the covariance matrix $\Sigma(n)$ of ${\cal W}(n)$ converges to $\Sigma$. Since for every fixed integer $n$ each component of ${\cal W}(n)$ belongs to the second Wiener chaos, in view of \cite[Theorem 6.2.3]{N&P}, the following two conditions are equivalent: 
\begin{enumerate}[1)]
\item ${\cal W}(n)$ converges in law to $V$;
\item each component of ${\cal W}(n)$ converges in distribution to a one-dimensional centred Gaussian random variable.\label{ii}
\end{enumerate}
We prove \ref{ii}): we observe that for $\lambda \in \Lambda_n/ \pm$ the random variables $|a_{\lambda}|^2$ are independent and identically distributed with mean and variance equal to $1$, we write 
\begin{align*}
W_{k,j}(n)&=2  \sum_{\lambda \in \Lambda_n/ \pm} (Q_{\lambda} - \mu_{\lambda}), 
\end{align*}
where 
$$Q_{\lambda}=\frac{\lambda_{(k)} \lambda_{(j)} }{n \sqrt{ {\cal N}_n}} |a_{\lambda}|^2, \hspace{1cm}\mu_{\lambda} =\frac{\lambda_{(k)} \lambda_{(j)} }{n \sqrt{ {\cal N}_n}},$$
and we note that the $Q_{\lambda}$ are independent random variables each with expected value $\mu_{\lambda}$ and variance $\mu^2_{\lambda}$, we also note that they have finite expected value and finite variance since $\lambda_{(k)}, \lambda_{(j)} \le \sqrt n$. Define
\begin{align*}
s^2_{\lambda}=\sum_{\lambda \in \Lambda_n/ \pm} {\rm Var}(Q_{\lambda}) 
= \begin{cases} 
\frac 1{2 \cdot 5} +O\left(\frac{1}{n^{1/28-o(1)}}\right), & k =j, \\
\frac 1{2 \cdot 3 \cdot 5} +O\left(\frac{1}{n^{1/28-o(1)}}\right), & k  \ne j. 
\end{cases} 
\end{align*}
We apply now the Lyapunov's condition so that we need to prove that 
\begin{align*}
\lim_{n \to \infty} \frac{1}{s^4_{\lambda}} \sum_{\lambda \in \Lambda_n/ \pm}  \mathbb{E} [|Q_{\lambda} - \mu_{\lambda}|^4]=0.
\end{align*}
To do that we first evaluate the $4$-th central moment of $Q_{\lambda}$; since $Q_{\lambda} \sim  \frac 1 2 \, \mu_{\lambda}\textrm{{\Large$\chi$}}_2$ where $\textrm{{\Large$\chi$}}_2$ is a chi-square with $2$ degrees of freedom, we need the moments: 
\begin{align*}
\mathbb{E}[ \textrm{{\Large$\chi$}}_2^m]=2^m \Gamma(m+1)
\end{align*}
so that 
\begin{align*}
\mathbb{E} [(Q_{\lambda} - \mu_{\lambda})^4] &= \mathbb{E}[Q^4_{\lambda} ] - 4 \mu_{\lambda}   \mathbb{E}[Q^3_{\lambda} ]+ 6  \mu^2_{\lambda}   \mathbb{E}[Q^2_{\lambda} ] - 4  \mu^3_{\lambda}  \mathbb{E}[Q_{\lambda} ] +  \mu^4_{\lambda}= \mu^4_{\lambda}.
\end{align*}
We finally note that 
\begin{align*}
0 \le \lim_{n \to \infty} \frac{1}{O(1)}  \sum_{\lambda \in \Lambda_n/ \pm}   \mu_{\lambda}^4 \le \lim_{n \to \infty} \frac{1}{O(1)}  \sum_{\lambda \in \Lambda_n/ \pm}  \frac{1}{{\cal N}^2_n}= \lim_{n \to \infty} \frac{1}{O(1)}   \frac{1}{2 {\cal N}_n}=0.
\end{align*}
So that 
\begin{align*}
\frac{1}{s_{\lambda}}W_{k,j}(n) \stackrel{d}{\to}  2 N(0,1)
\end{align*}
that is $W_{k,j}(n)$ converges in distribution to a centred Gaussian with variance $2/5$ if $k=j$ and $2/(3 \cdot 5)$ if $k \ne j$.
\end{proof}
\noindent Note that the covariance matrix $\Sigma$ is non-singular. The multidimensional CLT stated in Lemma \ref{W&Wjk} implies that 
\begin{align*}
& 4- (W_{1,1}(n)-W_{2,2}(n))^2  - (W_{1,1}(n)-W_{3,3}(n))^2 - (W_{2,2}(n)-W_{3,3}(n))^2 \\
&\;\; -6 ( W^2_{1,2}(n) + W^2_{1,3}(n) + W^2_{2,3}(n))  + o_{\mathbb{P}}(1) \\
& \stackrel{law}{\to}4- (V_{1,1}-V_{2,2})^2  - (V_{1,1}-V_{3,3})^2 - (V_{2,2}-V_{3,3})^2  -6 ( V^2_{1,2}+ V^2_{1,3} + V^2_{2,3}) \\
& = 4- \sum_{i=1}^3 X^2_{i}   -6  \sum_{i=1}^3 Y^2_{i},
\end{align*}
where $X=(X_1,X_2, X_3)$ and $Y=(Y_1, Y_2, Y_3)$ are two independent centred Gaussian vector with covariance matrices  
$$\Sigma_{X}%= \frac  {2^2}{3 \cdot 5} \left(
%\begin{matrix}
%2 &  1&- 1 \\
%1 &  2 &  1 \\
%-1 &   1 &  2
%\end{matrix} \right)
= \frac  {2^3}{3 \cdot 5} \left(
\begin{matrix}
1 &  \frac 1 2&- \frac 1 2 \\
\frac 1 2 &  1 &  \frac 1 2 \\
-\frac 1 2 &   \frac 1 2 &  1
\end{matrix} \right), \hspace{1cm}
\Sigma_{Y}=  \frac  {2}{3 \cdot 5} \left(
\begin{matrix}
1 &0&0\\
 0 & 1 &0\\
 0 &0&1
\end{matrix} \right),
$$
respectively. The covariance matrix $\Sigma_X$ is singular, hence we consider the transformation $X_1=Z_1$, $X_2=\alpha Z_1 + \beta Z_2$, $X_3=\gamma Z_1 + \delta Z_2$ where $Z=(Z_1, Z_2)$ is a centred Gaussian vector with covariance matrix $\Sigma_Z=\frac  {2^3}{3 \cdot 5} I_2$. We note that
\begin{align*}
&\text{Cov} (X_1, X_2)= \frac 1 2 \frac  {2^3}{3 \cdot 5}=\text{Cov}(Z_1, \alpha Z_1+\beta Z_2)= \frac  {2^3}{3 \cdot 5} \alpha,\\
&\text{Cov} (X_1, X_3)= - \frac 1 2 \frac  {2^3}{3 \cdot 5}=\text{Cov}(Z_1, \gamma Z_1+\delta Z_2)= \frac  {2^3}{3 \cdot 5} \gamma,\\
&\text{Cov} (X_2, X_3)=  \frac 1 2 \frac  {2^3}{3 \cdot 5}=\text{Cov}(\alpha Z_1+\beta Z_2, \gamma Z_1+\delta Z_2)=\frac  {2^3}{3 \cdot 5} (\alpha \gamma +\beta \delta),\\
&\text{Var} (X_2)=\frac  {2^3}{3 \cdot 5}=\text{Var} ( \alpha Z_1+\beta Z_2) = \frac  {2^3}{3 \cdot 5}(\alpha^2+\beta^2);
\end{align*} 
this implies that $\alpha=-\gamma=1/2$ and $\delta=\beta=\sqrt 3/2$. We write 
\begin{align} \label{13:20}
4- \sum_{i=1}^3 X^2_{i}   -6  \sum_{i=1}^3 Y^2_{i}&=4-\frac 6 4 \sum_{i=1}^2 Z^2_{i}   -6  \sum_{i=1}^3 Y^2_{i}
%&=4-\frac 6 4  \frac{2^3}{3 \cdot 5}  \sum_{i=1}^2 V^2_{i}   -6  \frac  {2}{3 \cdot 5} \sum_{i=1}^3 U^2_{i}\\
%&=4- 6   \frac{2}{3 \cdot 5}  \sum_{i=1}^2 V^2_{i}   -6  \frac  {2}{3 \cdot 5} \sum_{i=1}^3 U^2_{i} \\
=4- 6   \frac{2}{3 \cdot 5}  \sum_{i=1}^5 U^2_i= 4-    \frac{4}{ 5} \textrm{{\Large$\chi$}};
\end{align}
where $U$ is a $5$-dimensional centred standard Gaussian vector and $\textrm{{\Large$\chi$}}$ is a central chi-square with $5$ degrees of freedom. 
\subsection{Proof of Theorem \ref{th}} In view of \eqref{13:20}, 
\begin{align*} 
\text{Var}({\cal A}_n[4]) =  \frac{n}{5^2 \cdot 3 \cdot {\cal N}^2_n} \frac{4^2}{5^2} \cdot 5 \cdot 2+o \left( \frac{n}{{\cal N}^2_n}\right) =
\frac{n}{ {\cal N}^2_n} \frac{2^5}{5^3 \cdot 3}+o \left( \frac{n}{{\cal N}^2_n}\right),
\end{align*}
and 
\begin{align*}
\frac{{\cal A}_n[4]}{\sqrt{ \text{Var}({\cal A}_n[4]) }} \stackrel{law}{\to} \frac{1}{\sqrt{5 \cdot 2}} \left(5-     \textrm{{\Large$\chi$}} \right).
\end{align*}
Theorem \ref{th} follows immediately by observing that ${\rm Var}({\cal A}_n) \sim {\rm Var}({\cal A}_n[4])$ and different chaotic components are orthogonal in $L^2$.

\appendix

\section{Chaotic expansion of ${\cal V}_n$: Proof Lemma \ref{caotic}} \label{caotic_A}

\subsection{Hermite expansion of $\frac{1}{2 \varepsilon} 1_{[-\varepsilon, \varepsilon]}(f_n(x))$}
We first expand the function $\frac{1}{2 \varepsilon} 1_{[-\varepsilon, \varepsilon]}(\cdot)$ into Hermite polynomials: using completeness and orthonormality of the set $\mathbb{H}$ in $L^2(\gamma)$, the following decomposition holds 
\begin{align*}
\frac{1}{2 \varepsilon} 1_{[-\varepsilon, \varepsilon]}(\cdot)= \sum_{k=0}^\infty \frac{1}{k!} \beta_k^{\varepsilon} H_k(\cdot),
\end{align*}
where 
\begin{align*}
\beta_0^{\varepsilon} = \frac{1}{2 \varepsilon} \int_{- \varepsilon}^{\varepsilon} \gamma(t) d t, \hspace{1cm} \beta_k^{\varepsilon} = \frac{1}{2 \varepsilon} \int_{- \varepsilon}^{\varepsilon} \gamma(t) H_k(t) d t, \;\; k \ge 1.
\end{align*}
From \eqref{hermite}, and observing that $H_k$ is an even function if $k$ is even, we easily obtain for $k =1, 2\dots$
\begin{align} \label{14giu}
\beta_{2 k-1}^{\varepsilon} =0,\hspace{0.5cm}  \beta_{2k}^{\varepsilon} &=-\frac{1}{ \varepsilon}   \gamma(\varepsilon) H_{2 k-1}( \varepsilon).  
\end{align}
For further details on the derivation of \eqref{14giu} see \cite[Lemma 3.4]{MPRW}

\subsection{Hermite expansion of $|| \nabla f_n(x) ||$} 
We first consider a standard Gaussian vector $Z$ in ${\mathbb R}^d$, the random variable $||Z||$ is square-integrable, so it can be expanded into a series of Hermite polynomials; we have 
\begin{align*} 
||Z|| = \sum_{p=0}^{\infty} \; \sum_{s \in \mathbb{N}^d: \; s_1+\cdots s_d=p} a(s) \prod_{j=1}^d H_{s_j} (Z_j),
\end{align*}
where 
\begin{align} \label{18:39}
a(s)= \frac{1}{s_1! \, s_2! \cdots s_d!} \frac{1}{(2 \pi)^{d/2}} \int_{\mathbb{R}^d} ||z|| \prod_{j=1}^d H_{s_j} (z_j) e^{-\frac{||z||^2}{2}} d z. 
\end{align}
We note that  $e^{-\frac{||z||^2}{2}}$ and  $||z||$ are even function of $z \in \mathbb{R}^d$, so $a(s)$ vanishes if one of the $s_j$ is odd. \\

We need to evaluate  the integrals $a(2s)$.  Following the proof in \cite[Lemma 3.5]{MPRW}, we introduce the Gaussian vector $U$ in $\mathbb{R}^d$ of independent random variables with unit variance and mean 
$\mu \in \mathbb{R}^d$, it is known that $||U||^2$ is distributed according to a non-central Chi-squared with probability density function 
\begin{align*}
f_{||U||^2} (t)=\sum_{i=0}^{\infty} \frac{1}{i!} e^{-\frac{||\mu||^2}{2}}  \frac{||\mu||^{2 i}}{2^i}  f_V(t), \hspace{1cm} f_V(t)= \frac{1}{2^{\frac{d+2 i}{2}}\Gamma(\frac{d+2 i}{2}) } t^{\frac{d+2i}{2}-1} e^{- \frac{t}{2}} 1_{\{t>0\}};
\end{align*}   
where $f_V$ is the density of a Chi-squared with $d+2 i$ degrees of freedom.  Therefore the probability density function of $||U||$ is $f_{||U||} (t) = 2 t f_{||U||^2} (t^2)$ and  
%\begin{align*}
%f_{||U||} (t) = 2 t f_{||U||^2} (t^2)=2 t \sum_{i=0}^{\infty} \frac{1}{i!} e^{-\frac{||\mu||^2}{2}}  \frac{||\mu||^{2 i}}{2^i} \frac{1}{2^{\frac{d+2 i}{2}}\Gamma(\frac{d+2 i}{2}) } t^{d+2i-2} e^{- \frac{t^2}{2}} 1_{\{t>0\}}.
%\end{align*}  
 its expectation is  
\begin{align} \label{18:41}
\mathbb{E}[||U||] &=  \int_0^{\infty} t f_{||U||} (t) dt 
%&=  2 \sum_{i=0}^{\infty} \frac{1}{i!} e^{-\frac{||\mu||^2}{2}}  \frac{||\mu||^{2i} }{2^i}  \frac{1}{2^{\frac{d+2 i}{2}}\Gamma(\frac{d+2 i}{2}) }  \int_0^{\infty}    t^{d+2i-2+1+1} e^{- \frac{t^2}{2}}  dt \nonumber \\
%&= 2 \sum_{i=0}^{\infty} \frac{1}{i!} e^{-\frac{||\mu||^2}{2}}  \frac{||\mu||^{2i}}{2^i}  \frac{1}{2^{\frac{d+2 i}{2}}\Gamma(\frac{d+2 i}{2}) }  2^{\frac{d+2 i -1}{2}} \, \Gamma \left(\frac{d+2 i +1}{2} \right) \nonumber  \\
=  e^{-\frac{||\mu||^2}{2}}   \sum_{i=0}^{\infty} \frac{1}{i!}||\mu||^{2 i} \; 2^{\frac 12 - i} \frac{ \Gamma(d/2+i+1/2) }{ \Gamma(d/2+i) }.   
\end{align}
%since 
%\begin{align*}
%\int_0^{\infty} t^{n-1} e^{-\frac{t^{\beta}}{\tau^{\beta}}} d t=\frac{\tau^{n}}{\beta} \Gamma\left( \frac{n}{\beta} \right).  
%\end{align*}
We Taylor expand the exponential and we apply Newton's formula 
\begin{align*}
&e^{-\frac{||\mu||^2}{2}} = \sum_{k_1, \dots, k_d=0}^{\infty} \frac{(-1)^{k_1+ \dots +k_d}}{k_1! \cdots k_d!}
\frac{1}{2^{k_1+ \dots+k_d}} \mu_1^{2 k_1} \cdots \mu_d^{2 k_d}\\
&||\mu||^{2 i}= \sum_{j_1+\cdots+j_d=i} \binom{i}{j_1, \dots, j_d} \mu_1^{2 j_1} \cdots \mu_d^{2 j_d},
\end{align*}
to rewrite \eqref{18:41} as follows
\begin{align} \label{13:14}
\mathbb{E}[||U||]%&=  \sum_{k_1,\dots, k_d=0}^{\infty} \frac{(-1)^{k_1+\cdots+k_d}}{k_1! \cdots k_d! } \frac{1}{2^{k_1+\cdots +k_d}} \mu_1^{2k_1} \cdots \mu_d^{2k_d}
%  \sum_{i=0}^{\infty} \frac{1}{i!\; 2^i }  \frac{\sqrt 2 \Gamma(\frac{d}{2}+i+\frac{1}{2}) }{ \Gamma(\frac{d}{2}+i) } \nonumber \\
%& \;\; \times \sum_{j_1+\cdots+j_d=i} \binom{i}{j_1, \dots , j_d}   \mu_1^{2 j_1} \cdots \mu_d^{2 j_d} \nonumber \\
&=    \sum_{l_1, \dots ,l_d=0}^{\infty}  \mu_1^{2 l_1} \cdots \mu_d^{2 l_d}
  \sum_{i=0}^{\infty} \frac{1}{i!\; 2^i }  \frac{ \sqrt 2 \Gamma(\frac{d}{2}+i+\frac{1}{2}) }{ \Gamma(\frac{d}{2}+i) } \\
& \times \sum_{\stackrel{j_1+\cdots+j_d=i}{j_1 \le l_1, \dots , j_d \le l_d}} \binom{i}{j_1,\dots , j_d}  \frac{(-1)^{l_1-j_1+\cdots +l_d-j_d}}{(l_1-j_1)! \cdots (l_d-j_d)! 2^{l_1-j_1+\cdots+l_d-j_d}}, \nonumber
\end{align}
where we set $l=k+j$.  %, so that $k_i=l_i - j_i$ and  $l_i \ge j_i$. 
On the other hand, we can rewrite the Gaussian expectation $\mathbb{E}[||U||]$ as 
\begin{align*}
\mathbb{E}[||U||] =\frac{1}{(2 \pi)^{d/2}} \int_{\mathbb{R}^d} ||u|| e^{-\frac{||u-\mu||^2}{2}} d u
\end{align*}
and using the Hermite expansion of the exponential \cite[Proposition 1.4.2]{N&P} 
\begin{align*}e^{c x - \frac{c^2}{2}}=\sum_{l=0}^{\infty} \frac{c^l}{l!} H_l(x), \hspace{1cm} c \in \mathbb{R},\end{align*}
and we get 
\begin{align} \label{13:15}
\mathbb{E}[||U||] %&=\frac{1}{(2 \pi)^{d/2}} \int_{\mathbb{R}^d} ||z|| \; e^{-\frac{||z-\mu||^2}{2}} d z \nonumber \\
&= \frac{1}{(2 \pi)^{d/2}} \int_{\mathbb{R}^d} ||u|| \;  e^{-\frac{1}{2}||u||^2} \; \prod_{j=1}^d e^{-\frac{\mu_j^2}{2}+ \mu_j u_j}  \; d u \nonumber	 \\ 
 % &= \frac{1}{(2 \pi)^{d/2}} \int_{\mathbb{R}^d} ||z||  \;  e^{-\frac{1}{2} ||z||^2	} \;  \prod_{j=1}^d 
   %\sum_{l_j=0}^{\infty} \frac{\mu_j^{l_j}}{l_j!} H_{l_j}(z_j)  d z \nonumber\\ 
  &=\frac{1}{(2 \pi)^{d/2}}   \sum_{l_1,\dots, l_d=0}^{\infty}    \frac{\mu_1^{l_1}   \cdots  \mu_d^{l_d}}{ l_1! \cdots l_d!} \int_{\mathbb{R}^d} ||u||  \;  e^{-\frac{1}{2} ||u||^2	} \;   \prod_{j=1}^d H_{l_j}(u_j)  d u \nonumber\\
 &= \frac{1}{(2 \pi)^{d/2}}  \sum_{l_1,\dots, l_d=0}^{\infty}      \frac{\mu_1^{2 l_1}   \cdots  \mu_d^{2 l_d}}{ (2 l_1)! \cdots (2 l_d)!} \int_{\mathbb{R}^d} ||u||   \;  e^{-\frac{1}{2} |u||^2	} \;      \prod_{j=1}^d 
 H_{2 l_j}(u_j)   d u,
\end{align}
since, if one of the $l_j$ is odd, then the integral vanishes. Combining  \eqref{13:14} and \eqref{13:15} we finally obtain 
\begin{align*}
&  \frac{1}{(2 \pi)^{d/2}} \frac{1}{ (2 l_1)! \cdots (2 l_d)!}  \int_{\mathbb{R}^d}     ||u||   \;  e^{-\frac{1}{2} ||u||^2} \;       \prod_{j=1}^d H_{2 l_j}(u_j)   d u \\
&=  \sum_{i=0}^{\infty} \frac{1}{i!\; 2^i }  \frac{ \sqrt 2 \Gamma(\frac d 2 +i+\frac 1 2) }{ \Gamma(\frac{d}{2}+i) }  \sum_{j_1+\cdots +j_d=i} \binom{i}{j_1, \dots , j_d}  \frac{(-1)^{l_1-j_1+\cdots+l_d-j_d}}{(l_1-j_1)! \cdots (l_d-j_d)! 2^{l_1-j_1+\cdots+l_d-j_d}},
\end{align*}
 and this implies that 
\begin{align*}
a(2 s)&= \sum_{i=0}^{\infty} \frac{1}{i!\; 2^i }  \frac{ \sqrt 2 \Gamma(\frac d 2 +i+ \frac 1 2) }{ \Gamma(\frac{d}{2}+i) }  \sum_{j_1+\cdots+j_d=i} \binom{i}{j_1, \dots , j_d}  \frac{(-1)^{s_1-j_1+\cdots+s_d-j_d}}{(s_1-j_1)! \cdots (s_d-j_d)! 2^{s_1-j_1+\cdots+s_d-j_d}}. 
\end{align*}\\

\noindent To obtain the Hermite expansion of $||\nabla f_n(x)||$, we note that 
\begin{align*}
 ||\nabla f_n(x)||&=2 \pi   \sqrt{ \frac{n }{d} } \, \left(  \sum_{j=1}^d  f^2_{n,j}(x) \right)^{1/2},
\end{align*} 
where $f_{n,j}(x)$ is the normalized derivative defined in \eqref{norm_d}, and that, for every fixed $x \in {\mathbb T}^d$, $f_n(x)$ and  $\partial_j f_n(x)$ are stochastically independent; then we have
\begin{align*}
\frac{1}{2 \varepsilon} 1_{[- \varepsilon, \varepsilon]} (f_n(x)) \; ||\nabla f_n(x)|| 
&=    \sum_{k=0}^{\infty} \frac{1}{(2k)!} \; \beta_{2 k}^{\varepsilon} \; H_{2k}(f_n(x))  2 \pi   \sqrt{ \frac{n }{d} }   \sum_{p=0}^{\infty} \sum_{s_1+\cdots+s_d=p}  a(2 s)  \prod_{j=1}^d H_{2 s_j} (f_{n,j}(x)),
\end{align*}
i.e. the projection onto each odd order chaos vanishes, whereas the projection onto the chaos of order $2 q$  for $q \ge 1$ is 
\begin{align*}
 {\cal V}_n^{\varepsilon}[2 q]=2 \pi   \sqrt{ \frac{n }{d} }    \sum_{p=0}^{q}    \frac{1}{(2q-2p)!} \; \beta_{2 q-2p}^{\varepsilon} \; H_{2q-2p}(f_n(x)) \sum_{s_1+\cdots+s_d=p} a(2 s)  \prod_{j=1}^d H_{2 s_j} (f_{n,j}(x)),
\end{align*}
where we set $2 q=2 k +2 p$ i.e. $2k=2q-2p$. 

\subsection{Evaluation of the coefficients $\beta_{2q-2p}$}
In view of the $L^2(\mathbb{P})$ convergence in \eqref{ttre}, the chaotic expansion of ${\cal V}_n$ follows by letting $\varepsilon \to 0$:  
$$\beta_{0}=\lim_{\varepsilon \to 0} \beta_{0}^{\varepsilon}=  \frac{1}{\sqrt{2 \pi}}, \hspace{1cm} \beta_{2k}=\lim_{\varepsilon \to 0} \beta_{2 k}^{\varepsilon}=- \lim_{\varepsilon \to 0} \frac{1}{ \varepsilon}   \gamma(\varepsilon) H_{2 k-1}( \varepsilon) =  \frac{1}{\sqrt{2 \pi}} H_{2 k }(0).$$

\noindent As noted in \cite[Section 3.2.2]{MPRW} we can interpret $\{\beta_{2 k}, k=0,1,2 \dots\}$ as the sequence of the coefficients appearing in the formal Hermite expansion of the Dirac mass $\delta_0$.\\

\section{Second and fourth order chaoses} \label{2&4_ch_A}

For $j,k =1, \dots, d$, we denote by $s(j)$ the vector in $\mathbb{R}^d$ with $j$-th component equal to $1$ and all the other components equal to $0$, and we denote by  $s(j,k)$ the vector in  $\mathbb{R}^d$ with $j$-th and $k$-th components equal to $1$ and all the other components equal to $0$. To evaluate the second and fourth order chaos we need the following lemma: 

\begin{proof}[Proof of Lemma \ref{aeb}]
From \eqref{18:30} we immediately derive $\beta_0$, $\beta_2$ and $\beta_4$. 
%\begin{align*}
%\beta_0&= \frac{1}{\sqrt{2 \pi}}, \hspace{0.4cm} \beta_2=\frac{1}{\sqrt{2 \pi}} H_2(0) = \frac{1}{\sqrt{2 \pi}} (-1), \hspace{0.4cm}\beta_4=\frac{1}{\sqrt{2 \pi}} H_4(0)=\frac{3}{\sqrt{2 \pi}}.
%\end{align*}
To evaluate $a(0)$ we note that, in view of \eqref{18:39}, $a(0)= {\mathbb E}[ ||Z||]$ where $Z$ is a standard Gaussian vector in $\mathbb{R}^d$,  then from \eqref{18:41} with $\mu=0$, we have 
\begin{align*}
a(0)&= {\mathbb E}[ ||Z||] %=e^{-\frac{||\mu||^2}{2}} \sum_{i=0}^{\infty} \frac{1}{i!}  ||\mu||^{2 i} \;  2^{\frac 1 2-i}  \; \frac{\Gamma(\frac d 2 + i + \frac 1 2)}{\Gamma(\frac d 2 +i)}
=\sqrt 2 \; \frac{\Gamma(\frac d 2+ \frac 1 2)}{\Gamma(\frac d 2)}.
\end{align*}
Now let $\gamma(i)=\frac{1}{i! 2^i}  \frac{ \sqrt 2 \Gamma(\frac d 2 + i + \frac 1 2)}{\Gamma(\frac d 2 +i)}$, we have $\gamma(0)= \frac{ \sqrt 2 \Gamma(\frac{d+1}2)}{\Gamma(\frac d 2)}$, $\gamma(1)=\frac{1}{2}  \frac{ \sqrt 2 \Gamma(\frac{d+3}2)}{\Gamma(\frac{d+2}2)}$ and $\gamma(2)=\frac{1}{2^3}  \frac{ \sqrt 2 \Gamma(\frac{d+5}2)}{\Gamma(\frac{d+4}2)}$. To evaluate $a(2 s(k))$ we apply \eqref{18:48} and we obtain: 
\begin{align*}
a(2 s(k))&=\sum_{i=0}^{1} \gamma(i) \sum_{j_k=i} \binom{i}{j_k} \frac{(-1)^{q_k-j_k}}{(q_k - j_k)!} \frac{1}{2^{q_k - j_k}} 
%= \gamma(0) \binom{0}{0} \frac{(-1)}{1!} \frac 1 2 + \gamma(1) \binom{1}{1} \frac{(-1)^0}{0!} \frac{1}{2^0}\\
 = -   \frac{\gamma(0)}{2} + \gamma(1)= \frac{1}{2 \sqrt 2} \frac{\Gamma( \frac d 2 + \frac 1 2)}{\Gamma(\frac d 2 +1)}.
\end{align*}
And similarly we derive $a(4 s(k))$ by observing that 
\begin{align*}
a(4 s(k))&=\sum_{i=0}^2 \gamma(i) \sum_{j_k=i} \binom{i}{j_k} \frac{(-1)^{2-j_k}}{(2-j_k)!} \frac{1}{2^{2-j_k}}=\frac{\gamma(0)}{2^3}  - \frac{ \gamma(1)}{2} + \gamma(2) 
%&= \frac{\sqrt 2}{2^3} \frac{\Gamma(\frac d 2 + \frac 1 2)}{\Gamma(\frac d 2)} - \frac{\sqrt 2}{4} \frac{\Gamma(\frac d 2 + 1 + \frac 1 2)}{\Gamma(\frac d 2 +1)} + \frac{\sqrt 2}{2^3} \frac{\Gamma(\frac d 2 +2+ \frac 1 2)}{\Gamma(\frac d 2 +2)}\\
=- \frac{\Gamma(\frac d 2 + \frac 1 2)}{2^4 \sqrt 2 \Gamma(\frac d 2 +2)}.
\end{align*} 
Finally,  for $j \ne k$,  we have that 
\begin{align*}
a(2 s(j,k))&= \sum_{i=0}^{2} \gamma(i) \sum_{j_1+j_2=i} \binom{i}{j_1, j_2} \frac{(-1)^{1-j_1+1-j_2}}{(1-j_1)! (1-j_2)!  } \frac{1}{2^{1-j_1+1-j_2}}
%&= \gamma(0)  \binom{0}{0,0} \frac{(-1)^2}{1! 1!} \frac{1}{2^2} + \gamma(1) 2 \binom{1}{1,0} \frac{(-1)^{2-1}}{1! 0!} \frac{1}{2^{2-1}} + \gamma(2) \binom{2}{1,1} \frac{(-1)^{2-2}}{0! 0!} \frac{1}{2^{2-2}}\\
=  \frac{\gamma(0)}{2^2} -  \gamma(1)  +2 \gamma(2)\\
&= - \frac{1}{2^3 \sqrt 2} \frac{\Gamma(\frac d 2 + \frac 1 2)}{\Gamma(\frac d 2 +2)}.
\end{align*}
\end{proof}

\begin{proof}[Proof of Lemma \ref{14:20}]
We apply the orthogonality relations of exponentials: 
 \begin{align*}
 \int_{\mathbb{T}^d} e(\langle \mu, x \rangle) d x =  \begin{cases} 1& \mu=0 \\ 0& \mu \ne 0. \end{cases} 
 \end{align*}
Note that $H_2(x)=x^2-1$, so we have 
\begin{align*}
 \int_{\mathbb{T}^d}  H_{2}(f_n(x))  d x&=  \int_{\mathbb{T}^d}  (f^2_n(x)-1)  d x= \frac{1}{{\cal N}_n} \sum_{\lambda_1, \lambda_2} a_{\lambda_1} a_{\lambda_2} \int_{\mathbb{T}^d} e (\langle \lambda_1, x \rangle)  e (\langle \lambda_2, x \rangle)  d x -1\\
 &= \frac{1}{{\cal N}_n} \sum_{\lambda_1+ \lambda_2=0} a_{\lambda_1} a_{\lambda_2} -1= \frac{1}{{\cal N}_n} \sum_{\lambda} a_{\lambda} \overline{a}_{\lambda}  -1= \frac{1}{{\cal N}_n} \sum_{\lambda} |a_{\lambda}|^2   -1\\
 &= \frac{1}{{\cal N}_n} \sum_{\lambda} \left( |a_{\lambda}|^2   -1 \right).
\end{align*}
\noindent For $k=1, \dots, d$,  
\begin{align*}
 \int_{\mathbb{T}^d}  H_{2}(f_{n,k}(x))  d x&=  \int_{\mathbb{T}^d}  (f^2_{n,k}(x)-1)  d x\\
 &= -\frac{d}{n   {\cal N}_n} \sum_{\lambda_1, \lambda_2} a_{\lambda_1} a_{\lambda_2} \lambda_{1,(k)} \,  \lambda_{2,(k)} \int_{\mathbb{T}^d} e (\langle \lambda_1, x \rangle)  e (\langle \lambda_2 , x \rangle)  d x -1\\
 &=- \frac{ d}{  n   {\cal N}_n} \sum_{\lambda_1 + \lambda_2=0} a_{\lambda_1} a_{\lambda_2} \lambda_{1,(k)}  \lambda_{2,(k)}   -1= \frac{d }{  n {\cal N}_n} \sum_{\lambda} |a_{\lambda}|^2 \lambda^2_{(k)}     -1 \\
  &= \frac{d }{  n {\cal N}_n} \sum_{\lambda}  \lambda^2_{(k)}  ( |a_{\lambda}|^2   -1 ). 
\end{align*}
where in the last step we applied \eqref{RW}. 
\end{proof}

\begin{proof}[Proof of Lemma \ref{inc&esc}]
Formula \eqref{1st} and formula \eqref{3rd} follow immediately form \eqref{M}. To prove \eqref{2nd} we note that 
\begin{align*}
 \sum_{ {\cal C}(4)} \lambda_{1,(k)}\, \lambda_{2,(k)} \, a_{\lambda_1} a_{\lambda_2} a_{\lambda_3} a_{\lambda_4}& = - \sum_{\lambda_1, \lambda_3}  \lambda_{1,(k)}^2 |a_{\lambda_1}|^2  |a_{\lambda_3}|^2 + 2 \sum_{\lambda_1, \lambda_2}  \lambda_{1,(k)}  \lambda_{2,(k)}  |a_{\lambda_1}|^2  |a_{\lambda_2}|^2 \\
 &\;\;+ \sum_{\lambda} \lambda^2_{(k)}  |a_{\lambda}|^4   + \sum_{{\cal X}(4)} \lambda_{1,(k)}\, \lambda_{2,(k)} \, a_{\lambda_1} a_{\lambda_2} a_{\lambda_3} a_{\lambda_4}
\end{align*}
where the second term cancels since $a_{-\lambda}=\overline{a}_{\lambda}$ and then 
\begin{align*}
\sum_{\lambda} \lambda_{(k)} |a_{\lambda}|^2 =0. 
\end{align*}
To prove the identity \eqref{4th} we apply again \eqref{M} to get
\begin{align*}
& \sum_{{\cal C}(4)}   \lambda_{1,(k)}  \lambda_{2,(k)} \lambda_{3,(j)} \lambda_{4,(j)} a_{\lambda_1} a_{\lambda_2} a_{\lambda_3} a_{\lambda_4} \\
&= \sum_{\lambda_1, \lambda_3} \lambda^2_{1,(k)}  \lambda^2_{3,(j)}  |a_{\lambda_1}|^2 |a_{\lambda_3}|^2 + 2 \sum_{\lambda_1, \lambda_2} \lambda_{1,(k)}  \lambda_{2,(k)}  \lambda_{1,(j)}  \lambda_{2,(j)}  |a_{\lambda_1}|^2 |a_{\lambda_2}|^2-3 \sum_{\lambda} \lambda^2_{(k)} \lambda^2_{(j)} |a_{\lambda}|^4  \\
 &\;\;+ \sum_{{\cal X}(4)}   \lambda_{1,(k)}  \lambda_{2,(k)} \lambda_{3,(j)} \lambda_{4,(j)} a_{\lambda_1} a_{\lambda_2} a_{\lambda_3} a_{\lambda_4}
\end{align*}
where we note that, in view of \eqref{RW}, we can write  
\begin{align*}
\sum_{\lambda_1, \lambda_2} \lambda_{1,(k)}  \lambda_{2,(k)}  \lambda_{1,(j)}  \lambda_{2,(j)}  |a_{\lambda_1}|^2 |a_{\lambda_2}|^2= \left[ \sum_{\lambda} \lambda_{(j)} \lambda_{(k)} (|a_{\lambda}|^2-1) \right]^2.
\end{align*}
\end{proof}

\begin{proof}[Proof of Lemma \ref{15:10}]
We use $H_4(x)=x^4-6 x^2+3$ and formula \eqref{11st} to write
\begin{align*}
\int_{\mathbb{T}^d}  H_{4}(f_n(x))dx &= \int_{\mathbb{T}^d} f^4_n(x)dx - 6  \int_{\mathbb{T}^d} f^2_n(x)dx +3\\
&=  \frac{1}{{\cal N}^2_n} \sum_{{\cal C}(4)} a_{\lambda_1} a_{\lambda_2} a_{\lambda_3} a_{\lambda_3}-     \frac{6}{{\cal N}_n} \sum_{\lambda}  |a_{\lambda}|^2   + 3
\end{align*}
in view of formula \eqref{1st} we have 
\begin{align*}
\int_{\mathbb{T}^d}  H_{4}(f_n(x))dx&=  \frac{3}{{\cal N}^2_n}  \sum_{\lambda_1, \lambda_2} |a_{\lambda_1}|^2  |a_{\lambda_2}|^2 -  \frac{3}{{\cal N}^2_n} \sum_{\lambda} |a_{\lambda}|^4 +  \frac{1}{{\cal N}^2_n} \sum_{{\cal \chi(4)}} a_{\lambda_{1}} a_{\lambda_{2}} a_{\lambda_{3}} a_{\lambda_{4}}-     \frac{6}{{\cal N}_n} \sum_{\lambda}  |a_{\lambda}|^2 +3\\
&= \frac{3}{{\cal N}^2_n} \left[   \sum_{\lambda} ( |a_{\lambda}|^2  -1) \right]^2  -  \frac{3}{{\cal N}^2_n} \sum_{\lambda} |a_{\lambda}|^4  +   \frac{1}{{\cal N}^2_n} \sum_{{\cal X}(4)} a_{\lambda_{1}} a_{\lambda_{2}} a_{\lambda_{3}} a_{\lambda_{4}}.
\end{align*}
\noindent We evaluate now 
\begin{align*}
&\sum_{k=1}^d \int_{\mathbb{T}^d}   H_{2}(f_n(x))  H_{2}(f_{n,k}(x))  dx\\
& = \sum_{k=1}^d \int_{\mathbb{T}^d}   \left( f_n^2(x) -1 \right) \left( f^2_{n,k}(x) -1 \right) d x \\
&= \sum_{k=1}^d \left\{  \int_{\mathbb{T}^d} f_n^2(x)   f^2_{n,k}(x) d x  -   \int_{\mathbb{T}^d} f^2_n(x)  d x   -    \int_{\mathbb{T}^d} f^2_{n,k}(x)  d x +1 \right\} \\
&=  \sum_{k=1}^d \left\{ -\frac{ d}{n\, {\cal N}^2_n}  \sum_{ {\cal C}(4)} \lambda_{1,(k)}\, \lambda_{2,(k)} \, a_{\lambda_1} a_{\lambda_2} a_{\lambda_3} a_{\lambda_4}  -   
\frac{1}{{\cal N}_n} \sum_{\lambda} |a_{\lambda}|^2 -   \frac{d}{n {\cal N}_n} \sum_{\lambda}\lambda^2_{(k)} |a_{\lambda}|^2 +1 \right\}\\
&= -\frac{d}{n\, {\cal N}^2_n} \sum_{k=1}^d \sum_{ {\cal C}(4)} \lambda_{1,(k)}\, \lambda_{2,(k)} \, a_{\lambda_1} a_{\lambda_2} a_{\lambda_3} a_{\lambda_4}  - \frac{2 d}{ {\cal N}_n} \sum_{\lambda} |a_{\lambda}|^2 +d
\end{align*}
where we applied \eqref{11st} and \eqref{22nd} and in the last step we use the fact that $\sum_k \lambda^2_{(k)}=n$; from formula \eqref{2nd} and formula \eqref{RW} we have 
\begin{align*}
&\sum_{k=1}^d \int_{\mathbb{T}^d}   H_{2}(f_n(x))  H_{2}(f_{n,k}(x))  dx\\
& =  \frac{d}{{\cal N}^2_n}  \sum_{\lambda_1, \lambda_2}  |a_{\lambda_1}|^2  |a_{\lambda_2}|^2  - \frac{d}{{\cal N}^2_n}  \sum_{\lambda} |a_{\lambda}|^4 - \frac{d}{n {\cal N}^2_n} \sum_{k=1}^d \sum_{{\cal X}(4)} \lambda_{1,(k)}\, \lambda_{2,(k)} \, a_{\lambda_1} a_{\lambda_2} a_{\lambda_3} a_{\lambda_4}- \frac{2 d}{ {\cal N}_n} \sum_{\lambda} |a_{\lambda}|^2 +d\\
&= \frac{d}{{\cal N}^2_n} \left[ \sum_{\lambda} (|a_{\lambda}|^2 -1) \right]^2    - \frac{d}{{\cal N}^2_n}  \sum_{\lambda} |a_{\lambda}|^4 - \frac{d}{n {\cal N}^2_n} \sum_{k=1}^d \sum_{{\cal X}(4)} \lambda_{1,(k)}\, \lambda_{2,(k)} \, a_{\lambda_1} a_{\lambda_2} a_{\lambda_3} a_{\lambda_4}. 
\end{align*}
We also have that 
\begin{align*}
& \int_{\mathbb{T}^d} H_4(f_{n,k}(x)) d x\\
 &=  \frac{d^2}{n^2 {\cal N}^2_n}  \sum_{{\cal C}(4)} \lambda_{1,(k)} \lambda_{2,(k)} \lambda_{3,(k)} \lambda_{4,(k)}
a_{\lambda_1} a_{\lambda_2} a_{\lambda_3} a_{\lambda_4} - 6  \int_{\mathbb{T}^d} f^2_{n,k}(x) d x + 3 \\
&=3  \frac{d^2}{n^2 {\cal N}^2_n}     \sum_{\lambda_1, \lambda_2} \lambda^2_{1,(k)}   \lambda^2_{2,(k)} |a_{\lambda_1}|^2  |a_{\lambda_2}|^2 - 3  \frac{d^2}{n^2 {\cal N}^2_n}   \sum_{\lambda} \lambda^4_{(k)} |a_{\lambda}|^4 \\
&\;\;+\frac{d^2}{n^2 {\cal N}^2_n}  \sum_{{\cal X}(4)}    \lambda_{1,(k)} \lambda_{2,(k)} \lambda_{3,(k)} \lambda_{4,(k)} a_{\lambda_1} a_{\lambda_2} a_{\lambda_3} a_{\lambda_4} -  \frac{6}{ {\cal N}_n}  \sum_{\lambda} |a_{\lambda}|^2  + 3 \\
&= \frac{3 d^2}{n^2 {\cal N}^2_n}  \left[  \sum_{\lambda} \lambda^2_{(k)} (|a_{\lambda}|^2 -1) \right]^2 - \frac{3 d^2}{n^2 {\cal N}^2_n}  \sum_{\lambda} \lambda^4_{(k)} |a_{\lambda}|^4  +\frac{d^2}{n^2 {\cal N}^2_n}    \sum_{{\cal X}(4)}    \lambda_{1,(k)} \lambda_{2,(k)} \lambda_{3,(k)} \lambda_{4,(k)} a_{\lambda_1} a_{\lambda_2} a_{\lambda_3} a_{\lambda_4}.
\end{align*}
And finally 
\begin{align*}
& \int_{\mathbb{T}^d} H_2(f_{n,j}(x) )  H_2(f_{n,k}(x) ) d x \\
&= \int_{\mathbb{T}^d}  \left( f^2_{n,j}(x) -1 \right)  \left( f^2_{n,k}(x) -1 \right)   d x \\
&=  \int_{\mathbb{T}^d}  \left( f^2_{n,j}(x) f^2_{n,k}(x) - f^2_{n,j}(x) -f^2_{n,k}(x)  +1 \right)   d x \\
&=  \frac{d^2}{n^2 {\cal N}^2_n}    \sum_{{\cal C}(4)}         \lambda_{1,(k)}  \lambda_{2,(k)} \lambda_{3,(j)} \lambda_{4,(j)} a_{\lambda_1} a_{\lambda_2} a_{\lambda_3} a_{\lambda_4}  - \frac{d}{n {\cal N}_n} \sum_{\lambda} \lambda^2_{(j)} |a_{\lambda}|^2 - \frac{d}{n {\cal N}_n} \sum_{\lambda} \lambda^2_{(k)} |a_{\lambda}|^2 +1,
\end{align*}
in view of Lemma \ref{inc&esc}, formula \eqref{4th},
\begin{align*}
& \int_{\mathbb{T}^d} H_2(f_{n,j}(x) )  H_2(f_{n,k}(x) ) d x \\
&=   \frac{d^2}{n^2 {\cal N}^2_n}   \sum_{\lambda_1, \lambda_2} \lambda^2_{1,(k)}  \lambda^2_{2,(j)}  |a_{\lambda_1}|^2 |a_{\lambda_2}|^2 +   \frac{2 d^2}{n^2 {\cal N}^2_n}   \Big[ \sum_{\lambda} \lambda_{(j)} \lambda_{(k)} (|a_{\lambda}|^2-1) \Big]^2\\
 &\;\;-    \frac{3 d^2}{n^2 {\cal N}^2_n}  \sum_{\lambda} \lambda^2_{(k)} \lambda^2_{(j)} |a_{\lambda}|^4+  \frac{d^2}{n^2 {\cal N}^2_n}  \sum_{{\cal X}(4)}   \lambda_{1,(k)}  \lambda_{2,(k)} \lambda_{3,(j)} \lambda_{4,(j)} a_{\lambda_1} a_{\lambda_2} a_{\lambda_3} a_{\lambda_4} \\
 &\;\; - \frac{d}{n {\cal N}_n} \sum_{\lambda} \lambda^2_{(j)} |a_{\lambda}|^2 - \frac{d}{n {\cal N}_n} \sum_{\lambda} \lambda^2_{(k)} |a_{\lambda}|^2 +1  \\
 &=   \frac{d^2}{n^2 {\cal N}^2_n}   \sum_{\lambda_1, \lambda_2} \lambda^2_{1,(k)}  \lambda^2_{2,(j)} ( |a_{\lambda_1}|^2-1) (|a_{\lambda_2}|^2-1) +   \frac{2 d^2}{n^2 {\cal N}^2_n}   \Big[ \sum_{\lambda} \lambda_{(j)} \lambda_{(k)} (|a_{\lambda}|^2-1) \Big]^2\\ 
 &\;\;-    \frac{ 3 d^2}{n^2 {\cal N}^2_n}  \sum_{\lambda} \lambda^2_{(k)} \lambda^2_{(j)} |a_{\lambda}|^4+  \frac{d^2}{n^2 {\cal N}^2_n}  \sum_{{\cal X}(4)}   \lambda_{1,(k)}  \lambda_{2,(k)} \lambda_{3,(j)} \lambda_{4,(j)} a_{\lambda_1} a_{\lambda_2} a_{\lambda_3} a_{\lambda_4}\\
 &= \frac{d^2}{ {\cal N}_n}  W_{k,k}(n) W_{j,j}(n)+   \frac{2 d^2}{{\cal N}_n}  W^2_{j,k} (n)-    \frac{ 3 d^2}{ {\cal N}_n}  R_{k,j}(n) +  \frac{d^2}{n^2 {\cal N}^2_n} X_{k,k,j,j}(n);
\end{align*}
now we note that
\begin{align*}
W_{kk} (n) \sum_{j \ne k} W_{jj} (n)&= \frac{1}{n^2 {\cal N}_n} \sum_{\lambda_1 \in \Lambda_n} \lambda^2_{1,(k)}  (|a_{\lambda}|^2-1) \sum_{j \ne k} \sum_{\lambda_2 \in \Lambda_n} \lambda^2_{2,(j)}  (|a_{\lambda}|^2-1)\\
&=  \frac{1}{n^2 {\cal N}_n} \sum_{\lambda_1 \in \Lambda_n} \lambda^2_{1,(k)}  (|a_{\lambda}|^2-1)  \sum_{\lambda_2 \in \Lambda_n} (n- \lambda^2_{2,(k)})  (|a_{\lambda}|^2-1)\\
&= W(n) W_{k,k}(n) - W^2_{k,k}(n)
\end{align*}
so that 
\begin{align*}
\sum_{k=1}^d W_{kk} (n) \sum_{j \ne k} W_{jj} (n)&=  W(n) \sum_{k=1}^d  W_{k,k}(n) - \sum_{k=1}^d W^2_{k,k}(n)= W^2(n) - \sum_{k=1}^d W^2_{k,k}(n).
\end{align*}
\end{proof}

\section{Covariance matrices $\Sigma(n)$ and $\Sigma$} \label{appendixcov}
In this section we compute the covariance matrix $\Sigma(n)$ and its limiting matrix $\Sigma$; we apply Lemma \ref{equid}, Lemma \ref{grouplemma}, equation \eqref{inv} and we use
  \begin{align*}
  \mathbb{E}[ (|a_{\lambda_1}|^2-1) (|a_{\lambda_2}|^2-1)]= \begin{cases} 1, &  {\rm if} \;  \lambda_1=\pm \lambda_2, \\ 0, & {\rm  otherwise}.   \end{cases}
  \end{align*}
We note that 
\begin{align*}
&\mathbb{E}[W_{k,l}(n) W_{j,m}(n)]\\
&= \frac{1}{n^2 {\cal N}_n} \sum_{\lambda_1, \lambda_2} \lambda_{1,(k)} \lambda_{1,(l)}  
\lambda_{2,(j)} \lambda_{2,(m)}   \mathbb{E}[ (|a_{\lambda_1}|^2-1)  (|a_{\lambda_2}|^2-1)]\\
&= \frac{1}{n^2 {\cal N}_n} \sum_{\lambda} \{  \lambda_{(k)} \lambda_{(l)}  
\lambda_{(j)} \lambda_{(m)}   \mathbb{E}[ (|a_{\lambda}|^2-1)^2] +  \lambda_{(k)} \lambda_{(l)}  
(- \lambda_{(j)}) (- \lambda_{(m)})   \mathbb{E}[ (|a_{\lambda}|^2-1)^2]  \} \\
&= \frac{2}{n^2 {\cal N}_n} \sum_{\lambda}  \lambda_{(k)} \lambda_{(l)}  
\lambda_{(j)} \lambda_{(m)}   \mathbb{E}[ (|a_{\lambda}|^2-1)^2] \\
&= \frac{2}{n^2 {\cal N}_n} \sum_{\lambda}  \lambda_{(k)} \lambda_{(l)} \lambda_{(j)}   \lambda_{(m)}  \\
&= \begin{cases} 
 \frac{2}{3 \cdot 5}+O\left( \frac{1}{n^{1/28 -o(1)}}\right), & k=l, j=m, k \ne j {\; \rm or\;}  k=j,  l=m,  k \ne l {\; \rm or\;}  k=m,  l=j, k \ne l, \\ 
\frac{2}{ 5}+O\left( \frac{1}{n^{1/28 -o(1)}}\right), & k=l=j=m,\\
0, &  {\rm otherwise}.
 \end{cases}
\end{align*}

\end{document}